\newtheoremstyle{mytheorem}%
{5pt}%
{3pt}%
{\itshape}%
{1pt}%
{\bf}%
{.}%
{.5em}%
{}%
\newtheoremstyle{myremark}%
{5pt}%
{3pt}%
{\upshape}%
{1pt}%
{\em}%
{.}%
{.5em}%
{}%
\newtheoremstyle{myexample}%
{5pt}%
{3pt}%
{\upshape}%
{1pt}%
{\bf}%
{.}%
{.5em}%
{}%
\theoremstyle{mytheorem}
\newtheorem{theorem}{Theorem}[section]
\newtheorem{lemma}[theorem]{Lemma}
\newtheorem{proposition}[theorem]{Proposition}
\newtheorem{corollary}[theorem]{Corollary}
\newtheorem*{theorem*}{Theorem}
\theoremstyle{myremark}
\newtheorem{remark}[theorem]{Remark}
\theoremstyle{myexample}
\newtheorem{example}[theorem]{Example}
\numberwithin{equation}{section}
\makeatletter \renewenvironment{proof}[1][\proofname] {\par\pushQED{\qed}\normalfont\topsep6\p@\@plus6\p@\relax\trivlist\item[\hskip\labelsep\itshape #1\@addpunct{.}]\ignorespaces}{\popQED\endtrivlist\@endpefalse} \makeatother
\renewcommand{\phi}{\varphi}
\renewcommand{\theta}{\vartheta}
\DeclareMathOperator{\supp}{supp}
\newcommand{\alg}{\mathscr{A}}
\newcommand{\abs}[1]{\lvert#1\rvert}
\newcommand{\dupN}{\mathbb{N}}
\newcommand{\eln}{\ell_{\dupC}^1(N)}
\newcommand{\ellegy}{\ell_{\dupC}^1(\dupN)}
\newcommand{\seq}[1]{(#1_{n})_{n\in\dupN}}
\newcommand{\dupC}{\mathbb{C}}
\newcommand{\pia}{\pi_A}
\newcommand{\dom}{\operatorname{dom}}
\newcommand{\ran}{\operatorname{ran}}
\newcommand{\beh}{\mathscr{B}(E;\hil)}
\newcommand{\bha}{\mathscr{B}(\hila)}
\newcommand{\bee}{\mathscr{B}(E;\anti{E})}
\newcommand{\bef}{\mathscr{B}(E;F)}
\newcommand{\ctc}{\mathscr{C}_0(T;\dupC)}
\newcommand{\ktc}{\mathscr{K}(T;\dupC)}
\newcommand{\D}{\mathscr{D}}
\newcommand{\M}{\mathscr{M}}
\newcommand{\hil}{H}
\newcommand{\hila}{H_A}
\DeclarePairedDelimiterX\sip[2]{(}{)}{#1\,\delimsize\vert\,#2}
\DeclarePairedDelimiterX\siptilde[2]{(}{)_{\!_{\widetilde{A}}}}{#1\,\delimsize\vert\,#2}
\DeclarePairedDelimiterX\sipn[2]{(}{)_{\nu}}{#1\,\delimsize\vert\,#2}
\DeclarePairedDelimiterX\sipm[2]{(}{)_{\mu}}{#1\,\delimsize\vert\,#2}
\DeclarePairedDelimiterX\set[2]{\{}{\}}{#1\,\delimsize\vert\,#2}
\DeclarePairedDelimiterX\dual[2]{\langle}{\rangle}{#1,#2}
\DeclarePairedDelimiterX\sipa[2]{(}{)_{\!_A}}{#1\,\delimsize\vert\,#2}
\newcommand{\anti}[1]{\bar{#1}'}
\newcommand{\bidual}[1]{\bar{#1}''}
\begin{document}
\title{Extensions of positive operators and functionals}

\author[Z. Sebesty\'en]{Zolt\'an Sebesty\'en}
\address{Z. Sebesty\'en, Department of Applied Analysis, E\"otv\"os L. University, P\'azm\'any P\'eter s\'et\'any 1/c., Budapest H-1117, Hungary; }
\email{sebesty@cs.elte.hu}
\author[Zs. Sz\H{u}cs]{Zsolt Sz\H{u}cs}
\address{Zs. Sz\H{u}cs, Department of Applied Analysis, E\"otv\"os L. University, P\'azm\'any P\'eter s\'et\'any 1/c., Budapest H-1117, Hungary;}

\email{szzsolti@cs.elte.hu}

\author[Zs. Tarcsay]{Zsigmond Tarcsay}
\address{Zs. Tarcsay, Department of Applied Analysis, E\"otv\"os L. University, P\'azm\'any P\'eter s\'et\'any 1/c., Budapest H-1117, Hungary; }
\email{tarcsay@cs.elte.hu}

\keywords{Positive operators, extensions, positive functionals, representable functionals, Banach $^*$-algebra}
\subjclass[2010]{Primary 47B25, 47B65}

\maketitle

\begin{abstract}
We consider linear operators defined on a subspace of a complex Banach space into its topological antidual acting positively in a natural sense.
The goal of this paper is to investigate of this kind of operators.
The main theorem is a constructive characterization of the bounded positive extendibility of these linear mappings.
From this result we can characterize the compactness of the extended operators and that when the positive extensions have closed ranges.

As a main application of our general extension theorem, we present some necessary and sufficient conditions that a positive functional defined on a left ideal of a Banach $^*$-algebra admits a representable positive extension.
The approach we use here is completely constructive.
\end{abstract}

\section{Introduction and preliminaries}

Positive operators and their positive extensions play a key role not only in the theory of Hilbert spaces, but in the theory of partial differential equations and mathematical physics, as well.
Hence it is worthy to examining the opportunity of positivity and positive extensions for operators defined on Banach spaces.
Since there is a canonical conjugate isometric isomorphism between a Hilbert space and its topological dual via the Riesz representation theorem, this gives us the idea that we may investigate operators between a Banach space and its \emph{topological anti\-du\-al} (see below).
It will turn out that the concept of positivity can be naturally defined for such operators, which includes the Hilbert space case, as well.

We will see in Example \ref{Ex:example4} that this kind of positive operators naturally appear in the theory of positive functionals on Banach $^*$-algebras.
Since these functionals are fundamental tools for the $^*$-representations of Banach $^*$-algebras, we will study these operators more closely later in the paper.

Before we introduce our motivations and goals, we present our terminology for positive operators.
Throughout this paper, let a complex Banach space $E$ be given.
We will denote by $\anti{E}$ the \emph{topological antidual} of $E$, that is, $\anti{E}$ consists of all continuous mappings $\phi$ of $E$ into the complex plane $\dupC$, which have the following properties:
\begin{align*}
    \phi(x+y)&=\phi(x)+\phi(y),\qquad x,y\in E,\\
    \phi(\lambda x)&=\overline{\lambda}\phi(x),\qquad x\in E,\lambda\in\dupC.
\end{align*}
The elements $\phi$ of $\anti{E}$ are called \emph{continuous anti-linear functionals} on $E$. For $x\in E$ and $\phi\in \anti{E}$ we set
\begin{align*}
    \dual{\phi}{x}:=\phi(x).
\end{align*}
It is immediately seen that $\anti{E}$ is a vector space (with pointwise operations) and that
\begin{equation*}
    \|\phi\|=\sup\set[\big]{\abs{\dual{\phi}{x}}}{x\in E, \|x\|\leq1}
\end{equation*}
defines a norm on $\anti{E}$, such that $\anti{E}$ is a Banach space with respect to this norm.
Indeed, the following canonical mapping
\begin{equation*}
    E'\to \anti{E},\quad f\mapsto \overline{f},
\end{equation*}
from  the \emph{topological dual} into the antidual of $E$ is one-to-one, onto, anti-linear and isometric with respect to the corresponding norms.
The topological antidual of $\anti{E}$, called the \emph{topological anti-bidual} of $E$ will be denoted by $\bidual{E}$ of $E$, so that $E$ can be isometrically embedded into $\bidual{E}$ along the linear mapping $j_E:E\to \bidual{E}, x\mapsto \widehat{x}$ where
\begin{equation*}
    \dual{\widehat{x}}{\phi}:=\widehat{x}(\phi)=\overline{\dual{\phi}{x}},\qquad x\in E, \phi\in \anti{E}.
\end{equation*}

If another complex Banach space $F$ is given, the \emph{(anti-)adjoint} of a continuous linear operator $T\in\bef$ is determined along the corresponding antidualities
\begin{equation}\label{E:antares}
    \dual{T^*y'}{x}=\dual{y'}{Tx}, \qquad x\in E, y'\in \anti{F},
\end{equation}
so that $T^*$ acts as a continuous linear operator between $\anti{F}$ and $\anti{E}$ with norm $\|T^*\|=\|T\|$.

Our main interest in this paper are linear operators $A$ from a linear subspace $\dom A$ of $E$ into $\anti{E}$ satisfying
\begin{equation}\label{E:positive}
    \dual{Ax}{x}\geq0,\qquad x\in\dom A.
\end{equation}
 Analogously to the case of Hilbert spaces, an operator satisfying \eqref{E:positive} will be called \emph{positive}.
\bigskip

In order to give some motivation, we have collected some examples of operators that are positive in the sense of \eqref{E:positive} (see also Examples \ref{Ex:example4} and \ref{Ex:example3}).
After these examples we introduce the main questions and purposes of the paper.

We start out with the prototype of positive operators, which justifies the usage of the word \emph{positive}:
\begin{example}\label{Ex:example1}
Let $\hil$ be a complex Hilbert space with inner product $\sip{\cdot}{\cdot}$. The topological antidual $\anti{\hil}$ can be  canonically identified with $\hil$ along the mapping $\hil\to\anti{\hil}$, $x\mapsto\sip{x}{\cdot}$, thanks to the  Riesz representation theorem.
An operator $\hil\to\hil$ therefore can be considered as an operator $\hil\to\anti{\hil}$. More precisely, if $A$ is any linear operator of $\hil$ into $\hil$ with domain $\dom A$, then we can assign an operator of $\hil$ into $\anti{H}$ (denoted also by $A$) as follows:
\begin{equation*}
    \dual{Ax}{y}:=\sip{Ax}{y},\qquad x\in\dom A, y\in \hil.
\end{equation*}
At the same time, if $A$ is positive in the classical sense (i.e., $\sip{Ax}{x}\geq0$ for $x\in\dom A$), then $A$ is positive in the sense of \eqref{E:positive}, as well.
\end{example}
A very natural construction of positive operators is described in the following example:
\begin{example}\label{Ex:example2}
    Let $E$ be a Banach space and $\hil$ be a Hilbert space with inner product $\sip{\cdot}{\cdot}$. Consider a continuous linear operator $T:E\to\hil$.
    The (anti-)adjoint $T^*$ of $T$ acts between $\anti{\hil}$ and $\anti{E}$, satisfying
    \begin{equation}\label{eq:cucu}
        \dual{T^*(\sip{x}{\cdot})}{y}=\dual{\sip{x}{\cdot}}{Ty}=\sip{x}{Ty},\qquad x\in\hil, y\in E.
    \end{equation}
    Since the mapping $x\mapsto\sip{x}{\cdot}$ is an isometric isomorphism of $\hil$ onto $\anti{\hil}$, we may consider $T^*$ as a mapping of $\hil$ into $\anti{E}$.
    Hence the composition operator $T^*T$ makes sense in such circumstances, and it is positive:
    \begin{equation*}
        \dual{T^*Ty}{y}=\sip{Ty}{Ty}\geq0,\qquad \mbox{for all $y\in E$}.
    \end{equation*}
    Furthermore, one easily verifies that the usual $C^*$-property $\|T^*T\|=\|T\|^2$ remains true also in this general setting (the proof is analogous to the Hilbert space case, \cite{Tarcsay_Banach}).
\end{example}

\begin{remark}\label{remark:exx2}
    Hereinafter we shall identify the topological antidual $\anti{\hil}$ of a Hilbert space $\hil$ with $\hil$ by the isomorphism $x\mapsto\sip{x}{\cdot}$.
In this terminology (\ref{eq:cucu}) reads: $\dual{T^*x}{y}=\sip{x}{Ty}$.
\end{remark}

The following example is self-evident.

\begin{example}\label{Ex:example5}
    Let $N$ be an arbitrary subset of $\dupN$, the set of nonnegative integers. Let us define $\eln$ by letting
    \begin{equation*}
        \eln=\set{x\in\ellegy}{\supp x\subseteq N}.
    \end{equation*}
    By considering a sequence $s\in\ell^{\infty}_{\dupC}(\dupN)$, $s(n)\geq0$ for $n\in N$, to each sequence $x\in\eln$ we can assign an element $Ax$ of the antidual of $\ellegy$ by letting
    \begin{equation*}
        \dual{Ax}{y}:=\sum_{n\in N} s(n)x(n)\overline{y(n)},\qquad y\in\ellegy.
    \end{equation*}
    By setting $E:=\ellegy$ we obtain that $A$ is a positive operator of $E$ into $\anti{E}$ with domain $\dom A=\eln$.
\end{example}

A very natural question arises in the context of positive operators: if $A$ is a positive operator from a linear subspace of the Banach space $E$ into its topological antidual $\anti{E}$, then
is there any bounded positive operator $\widetilde{A}\in\bee$ extending $A$?
If $A$ is discontinuous, there are not any, of course.
By assuming $A$ to be bounded, the answer remains henceforward no, even in the very special case when $E$ is a Hilbert space, see \cite{Krein47a}.
As we will see in our main result (Theorem \ref{T:Krein-Neumann}), the positive extendibility of $A$ is up to  the following Schwarz-type inequality (cf. \cite{Sebestyen83a,Sebestyen93} and \cite{Tarcsay_Banach} for the Banach space setting):
\begin{equation*}
    \|Ax\|^2\leq M\cdot \dual{Ax}{x}, \qquad x\in\dom A,
\end{equation*}
with some constant $M\geq0$.
Analogously to the Hilbert space case, it turns out that if the set $Ext(A)$ of all bounded positive extensions of $A$ is nonempty,  then there exists a minimal element (in the sense of a very natural partial ordering similar to the Hilbert space case, see Section \ref{S:genre}) $A_N$ of $Ext(A)$ which we will call the \emph{Krein--von Neumann extension} of $A$.
Following the treatment of \cite{Sebestyen93}, in our result Theorem \ref{T:Krein-Neumann} we will give $A_N$ via factorization over an auxiliary Hilbert space associated with $A$.
That is, we construct a Hilbert space $\hil$ and a bounded operator $T\in\beh$ such that the bounded positive operator $T^*T$ (where $T^*$ is the (anti-)adjoint of $T$, see \eqref{E:antares}) is the smallest positive extension of $A$.

Our construction makes it also possible to extend the results of \cite{Sebestyen93} and \cite{Tarcsay} on the existence of compact and closed range extensions of Hilbert space operators, as well.
Namely, we characterize those positive operators $A:E\to\anti{E}$ whose Krein--von Neumann extension is compact or has closed range (Theorems \ref{T:compuct} and \ref{T:closedrangeext}).

In the last section of our paper we apply these results in a very important special case, namely, for positive operators which are naturally induced by positive functionals on a left ideal of a Banach $^*$-algebra.
The following example describes this situation.

\begin{example}\label{Ex:example4}
    Let $\alg$ be a Banach $^*$-algebra, that is a (not necessarily unital) $^*$-algebra with a complete submultiplicative norm. Note that we do not assume that the involution of $\alg$ is continuous. Let $\M$ be any left ideal of $\alg$ and $f:\M\to \dupC$ a linear functional which is positive in the sense that
    \begin{equation*}
        f(a^*a)\geq0,\qquad \mbox{for all $a\in\M$}.
    \end{equation*}
    Assume in addition that for any fixed $a\in\M$ the following mapping
    \begin{equation*}
        Aa: \alg\to\dupC, \quad x\mapsto f(x^*a),
    \end{equation*}
    is continuous, i.e., $Aa\in\anti{\alg}$. Then clearly, $A:\alg\supseteq \M\to\anti{\alg}$ is a positive linear operator.
\end{example}

For further discussions, we recall the concept of representability of a positive linear functional.
A positive linear functional $f:\alg\to\dupC$ is called \emph{representable} if there is a representation $\pi$ of $\alg$ on a Hilbert space $\hil$ and there exists a vector $\zeta\in\hil$ such that
\begin{equation*}
    f(x)=\sip{\pi(x)\zeta}{\zeta},\qquad \mbox{for all $x\in\alg$}.
\end{equation*}
The representation $\pi$ is called (topologically) cyclic (with cyclic vector $\zeta$) if the set
\begin{equation*}
\pi\langle\alg\rangle\zeta:=\set{\pi(a)\zeta}{a\in\alg}
\end{equation*}
 is dense in the Hilbert space $\hil$.
 We note here that a representable positive functional is automatically continuous (\cite[Theorem 11.3.4]{palmer}).

It turns out that the extendibility problem of the positive operator in the example above is closely related to the existence of a representable extension of the positive functional under consideration.
Applying the procedure used for positive operators (Theorem \ref{T:Krein-Neumann}), we characterize those functionals which have any representable positive extension to the whole algebra.
We prove that in the case of representable extendibility of a functional $f$ there exists an extremal representable extension $f_N$ of $f$ which is minimal in the sense that $f_N(a^*a)\leq \widetilde{f}(a^*a)$ holds for any representable positive extension $\widetilde{f}$ of $f$, for all $a\in\alg$.
That is, our main result in this setting (Theorem \ref{T:funcext}) is the following

\begin{theorem*}
    Let $\alg$ be Banach $^*$-algebra, $\M$ a left ideal of $\alg$ and $f:\M\to\dupC$ a linear functional. The following statements are equivalent:
    \begin{enumerate}[\upshape (i)]
    \item There is a representable positive functional $f_N\in\alg'$ extending $f$, which is minimal in the sense that
          \begin{equation*}
            f_N(x^*x)\leq \widetilde{f}(x^*x),\qquad\mbox{for all $x\in\alg$,}
          \end{equation*}
          holds for any representable positive extension $\widetilde{f}\in\alg'$ of $f$.
    \item There is a representable positive functional $\widetilde{f}\in\alg'$ extending $f$.
    \item There is a number $C\geq0$ such that
    \begin{align*}
        \abs{f(a)}^2\leq C\cdot f(a^*a),\qquad \mbox{for all $a\in\M$}.
    \end{align*}
    \end{enumerate}
\end{theorem*}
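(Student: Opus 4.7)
The plan is to prove (i)~$\Rightarrow$~(ii)~$\Rightarrow$~(iii)~$\Rightarrow$~(i), with Theorem~\ref{T:Krein-Neumann} carrying the main burden. (i)~$\Rightarrow$~(ii) is immediate. For (ii)~$\Rightarrow$~(iii), if $\widetilde{f}(x)=\sip{\pi(x)\zeta}{\zeta}$ is a representable extension, then for $a\in\M$ Cauchy--Schwarz in the representing Hilbert space yields
\begin{equation*}
|f(a)|^2=|\sip{\pi(a)\zeta}{\zeta}|^2\leq\|\pi(a)\zeta\|^2\|\zeta\|^2=\sip{\pi(a^*a)\zeta}{\zeta}\|\zeta\|^2=f(a^*a)\|\zeta\|^2,
\end{equation*}
so $C:=\|\zeta\|^2$ works.

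For the main implication (iii)~$\Rightarrow$~(i), I would work with the positive operator $A:\M\to\anti{\alg}$ of Example~\ref{Ex:example4}, $\dual{Aa}{x}:=f(x^*a)$, well defined because $\M$ is a left ideal. Combining (iii) applied to $c=x^*a\in\M$ with the Cauchy--Schwarz inequality for the positive sesquilinear form $(u,v)_f:=f(v^*u)$ on $\M$ gives $f(a^*xx^*a)^2\leq f(a^*(xx^*)^2 a)\cdot f(a^*a)$; iterating yields $f(a^*xx^*a)\leq f(a^*(xx^*)^{2^n}a)^{1/2^n}\cdot f(a^*a)^{1-1/2^n}$, and the Banach-algebra bound $\|(xx^*)^{2^n}\|\leq\|xx^*\|^{2^n}$ together with a Gelfand spectral-radius limit as $n\to\infty$ produces both $Aa\in\anti{\alg}$ for every $a\in\M$ and the Schwarz-type estimate $\|Aa\|^2\leq M\,\dual{Aa}{a}$. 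Theorem~\ref{T:Krein-Neumann} then furnishes the Krein--von~Neumann extension $A_N=T^*T$ with $T\in\mathscr{B}(\alg;\hil)$ into an auxiliary Hilbert space, so that $\sip{Ta}{Tb}=\dual{A_N a}{b}$ and in particular $f(b^*a)=\sip{Ta}{Tb}$ for $a\in\M$, $b\in\alg$.

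With $T$ at hand, I would construct the representation $\pi$ of $\alg$ on $\overline{\ran T}\subseteq\hil$ by $\pi(y)(Tx):=T(yx)$ on the dense subspace $\ran T$. The identity $\dual{A(ya)}{b}=f(b^*ya)=f((y^*b)^*a)=\dual{Aa}{y^*b}$ on $\M$ transports, by the uniqueness of the Krein--von~Neumann construction, to $\sip{T(yx)}{Tz}=\sip{Tx}{T(y^*z)}$ on $\alg$, which both legitimates the definition of $\pi(y)$ and yields the $^*$-property $\pi(y)^*=\pi(y^*)$; repeating the iterated Cauchy--Schwarz plus spectral-radius argument on $\|T(yx)\|^2=\dual{A_N(yx)}{yx}$ gives $\|\pi(y)\|\leq\|y^*y\|^{1/2}$, so $\pi$ is a bounded $^*$-representation. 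Condition (iii) together with $\|Ta\|^2=f(a^*a)$ shows that $Ta\mapsto f(a)$ is bounded on $T[\M]$, so Riesz produces $\zeta\in\overline{\ran T}$ with $\|\zeta\|^2\leq C$ and $f(a)=\sip{Ta}{\zeta}$ for $a\in\M$; a duality check based on Hermitian symmetry $\sip{Ta}{Tb}=\overline{\dual{A_N b}{a}}$ of the positive form gives $\pi(x)\zeta=Tx$, so that $f_N(x):=\sip{\pi(x)\zeta}{\zeta}$ is a representable extension of $f$ with $f_N(x^*x)=\|Tx\|^2=\dual{A_N x}{x}$. Its minimality transfers from that of $A_N$: any representable extension $\widetilde f\in\alg'$ of $f$ induces a bounded positive extension $\widetilde A\in\mathscr{B}(\alg;\anti{\alg})$ of $A$ via $\dual{\widetilde Ax}{y}:=\widetilde f(y^*x)$, so $A_N\leq\widetilde A$ in the ordering of Section~\ref{S:genre}, giving $f_N(x^*x)=\dual{A_Nx}{x}\leq\dual{\widetilde A x}{x}=\widetilde f(x^*x)$.

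The principal obstacle is the pair of iterated Cauchy--Schwarz plus spectral-radius estimates: upgrading the scalar bound (iii) to the operator-norm Schwarz inequality for $A$ (so as to invoke Theorem~\ref{T:Krein-Neumann}) and deriving the representation bound $\|\pi(y)\|\leq\|y^*y\|^{1/2}$. Both exploit the submultiplicativity of the Banach $^*$-algebra norm and the Gelfand formula rather than the Hilbert-space extension machinery alone, and this is the step where the full strength of (iii) is deployed.
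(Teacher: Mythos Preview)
Your overall architecture matches the paper's: pass from $f$ to the positive operator $A$ of Example~\ref{Ex:example4}, invoke Theorem~\ref{T:Krein-Neumann} to get $A_N=T^*T$, build a cyclic $^*$-representation on the auxiliary Hilbert space with cyclic vector obtained via Riesz, and read off minimality from $A_N\leq\widetilde A$. The paper does essentially the same (Theorems~\ref{T:GNS} and~\ref{T:funcext}), with one crucial difference in how the spectral-radius inequality
\begin{equation*}
    f(a^*x^*xa)\leq r(x^*x)\,f(a^*a),\qquad a\in\M,\ x\in\alg,
\end{equation*}
is obtained.

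There is a genuine gap in your iterated Cauchy--Schwarz argument. From
\begin{equation*}
    f(a^*xx^*a)\leq f\big(a^*(xx^*)^{2^n}a\big)^{1/2^n}\,f(a^*a)^{1-1/2^n}
\end{equation*}
you want to pass to the limit using $\|(xx^*)^{2^n}\|\leq\|xx^*\|^{2^n}$ and the Gelfand formula. But this requires an a priori estimate of the form $f(a^*za)\leq K_a\|z\|$ to control the first factor, and no such bound is available: $f$ is only defined on the ideal $\M$, is not assumed continuous, and hypothesis~(iii) goes the wrong way (it bounds $|f(c)|^2$ \emph{above} by $f(c^*c)$, not below). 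The standard iterated-CS proof of this inequality uses continuity of the functional on the whole algebra, which you do not have. The paper's Lemma~\ref{L:mainlemma} avoids this entirely by a purely algebraic device: Ford's square-root lemma gives, for $r(x^*x)<1$, a hermitian $y$ with $2y-y^2=x^*x$, whence $f(a^*a)-f(a^*x^*xa)=f\big((a-ya)^*(a-ya)\big)\geq0$. This needs only positivity of $f$ on $\M$, nothing more.

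A secondary issue: your claim that the identity $\dual{A(ya)}{b}=\dual{Aa}{y^*b}$ on $\M$ ``transports, by the uniqueness of the Krein--von~Neumann construction,'' to $\sip{T(yx)}{Tz}=\sip{Tx}{T(y^*z)}$ for all $x,z\in\alg$ is not justified; $\M$ need not be dense in $\alg$, and minimality of $A_N$ does not carry multiplicative identities across. The paper sidesteps this by defining $\pi_A(y)$ only on the dense set $\ran A=T[\M]$ (where the identity holds tautologically), proving boundedness there via the spectral-radius inequality above, and only then extending to $\hil_A$. Once you have Lemma~\ref{L:mainlemma}, this route closes both the well-definedness and the norm bound for $\pi$ simultaneously, and your second invocation of iterated CS becomes unnecessary.
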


Motivated by the case of positive operators, $f_N$ is called the \emph{Krein--von Neumann extension} of the functional $f$.
Our method for giving $f_N$ is constructive and is based on the GNS construction (\cite{Sebestyen84}, cf. also \cite{lesC*alg,palmer}) and on the treatment employed for giving the Krein--von Neumann extension of a positive operator.

\section{Some generalities about positive operators between Banach spaces}\label{S:genre}

In this section we investigate several general properties of positive operators from a Banach space $E$ into its antidual $\anti{E}$.
It turns out that the behavior of these operators is very similar to the well known Hilbert space operator case.
First of all we note that a positive operator $A:E\supseteq\dom A\to\anti{E}$ is automatically \emph{symmetric} in the sense that
\begin{equation}\label{E:symmetry}
    \dual{Ax}{y}=\overline{\dual{Ay}{x}},\qquad x,y\in\dom A,
\end{equation}
Indeed, an easy calculation shows this via comparing the nonnegative numbers
\begin{equation*}
\dual{A(x+y)}{x+y} \qquad \mbox{and} \qquad \dual{A(ix+y)}{ix+y}.
\end{equation*}
Thus the mapping
\begin{equation*}
    (x,y)\mapsto \dual{Ax}{y}
\end{equation*}
defines a semi-inner product on $\dom A$, and therefore the Cauchy--Schwarz inequality yields
\begin{equation}\label{E:sip-Schwarz}
    \abs{\dual{Ax}{y}}^2\leq\dual{Ax}{x}\dual{Ay}{y},\qquad x,y\in \dom A.
\end{equation}
Observe also that a partial ordering on the set of bounded positive operators can be introduced in a very natural way, namely, for two given positive operators $A, B\in\bee$ we set $A\leq B$ if and only if $\dual{Ax}{x}\leq\dual{Bx}{x}$ for all $x\in E$.

Our first result in this section is a generalization of the classical Hellinger--Toeplitz theorem which states that an everywhere defined symmetric operator on a Hilbert space is automatically continuous:
\begin{theorem}\label{T:Hellinger}
    Let $E$ be a Banach space, and let $A:E\to\anti{E}$ be a linear operator satisfying \eqref{E:symmetry}.
    Then $A$ is continuous.
\end{theorem}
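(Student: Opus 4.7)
The plan is to invoke the Closed Graph Theorem, exactly as in the classical Hilbert space proof, with the symmetry condition \eqref{E:symmetry} playing the role of self-adjointness.

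First I would observe that both $E$ and $\anti{E}$ are Banach spaces, so it suffices to verify that the graph of $A$ is closed. To this end, suppose $(x_n)$ is a sequence in $E$ with $x_n\to x$ in $E$ and $Ax_n\to\phi$ in $\anti{E}$ for some $\phi\in\anti{E}$. I need to show $Ax=\phi$.

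The key step is to test both sides against an arbitrary $y\in E$. On one hand, norm convergence $Ax_n\to\phi$ in $\anti{E}$ yields
\begin{equation*}
\dual{Ax_n}{y}\to\dual{\phi}{y},\qquad y\in E,
\end{equation*}
since $\abs{\dual{Ax_n-\phi}{y}}\leq\|Ax_n-\phi\|\cdot\|y\|$. On the other hand, the symmetry \eqref{E:symmetry} gives $\dual{Ax_n}{y}=\overline{\dual{Ay}{x_n}}$; because $Ay\in\anti{E}$ is by hypothesis a continuous antilinear functional on $E$ and $x_n\to x$ in $E$, we obtain
\begin{equation*}
\overline{\dual{Ay}{x_n}}\to\overline{\dual{Ay}{x}}=\dual{Ax}{y},
\end{equation*}
where the last equality again uses \eqref{E:symmetry}. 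Comparing the two limits yields $\dual{\phi}{y}=\dual{Ax}{y}$ for every $y\in E$, hence $\phi=Ax$.

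This shows that the graph of $A$ is closed, and the Closed Graph Theorem delivers the continuity of $A$. There is no real obstacle here, the only subtlety being the distinction between norm convergence of $Ax_n$ in $\anti{E}$ (which allows us to pair with arbitrary $y\in E$) and the a priori weaker pointwise continuity of each individual functional $Ay\in\anti{E}$, both of which are used at different stages of the argument.
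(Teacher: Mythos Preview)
Your proof is correct. The paper, however, argues via the Banach--Steinhaus theorem rather than the Closed Graph Theorem: for each $y\in E$ with $\|y\|\leq 1$ one has $\abs{\dual{Ay}{x}}=\abs{\dual{Ax}{y}}\leq\|Ax\|$ by symmetry, so the family $\set{Ay}{\|y\|\leq 1}\subseteq\anti{E}$ is pointwise bounded on $E$, and uniform boundedness gives $\sup_{\|y\|\leq 1}\|Ay\|<\infty$ directly. Both routes are standard proofs of the classical Hellinger--Toeplitz theorem and both rest on Baire category; the paper's argument is marginally shorter and yields the operator norm bound in one step, while your closed-graph approach has the minor advantage of separating the two uses of continuity (norm convergence of $Ax_n$ versus continuity of each $Ay$) more transparently.
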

\begin{proof}
    For any fixed $y\in E$, $\|y\|\leq1$, we have
    \begin{align*}
        \abs{\dual{Ay}{x}}=\abs{\dual{Ax}{y}}\leq\|Ax\|\|y\|\leq\|Ax\|, \qquad \mbox{for all $x\in E$},
    \end{align*}
    and therefore the set $\set{Ay}{y\in E,\|y\|\leq1}$ is a pointwise bounded  family of continuous anti-linear functionals on the Banach space $E$.
    In the view of  Banach's principle of uniform boundedness,
    \begin{equation*}
        \sup\set[\big]{\|Ay\|}{y\in E, \|y\|\leq1}<\infty,
    \end{equation*}
    which yields the continuity of $A$.
\end{proof}
We also note that the adjoint of an operator $A\in\bee$ acts as a continuous operator from $\bidual{E}$ into $\anti{E}$. Therefore, it makes sense to consider the composition $A^*\circ j_E$, where $j_E$ is the canonical embedding of $E$ into $\bidual{E}$.
A reasonable generalization of selfadjointness can be given therefore as follows: the operator $A\in\bee$ is called \emph{selfadjoint} if $A^*\circ j_E=A$.
In the following statement we investigate the relation between symmetry and selfadjointness in this setting:
\begin{proposition}\label{proposition:symmm}
   Let $E$ be a Banach space and $A\in\bee$ a continuous linear operator. The following statements are equivalent:
    \begin{enumerate}[\upshape (i)]
      \item $A$ is symmetric in the sense of \eqref{E:symmetry}.
      \item $A$ is selfadjoint, i.e., $A=A^*\circ j_E$.
    \end{enumerate}
\end{proposition}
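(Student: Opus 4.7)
The plan is to unwind the definition of the adjoint in the anti-duality setting and observe that selfadjointness is literally a reformulation of the symmetry identity \eqref{E:symmetry}. Since $A\in\mathscr{B}(E;\anti{E})$, applying \eqref{E:antares} with $F:=\anti{E}$ (whose topological antidual is $\bidual{E}$) yields an adjoint $A^*\in\mathscr{B}(\bidual{E};\anti{E})$ determined by
\begin{equation*}
    \dual{A^*\xi}{y}=\dual{\xi}{Ay},\qquad \xi\in\bidual{E},\ y\in E.
\end{equation*}
I would first substitute $\xi:=\widehat{x}=j_E(x)$ for $x\in E$ into this identity and then use the definition of $j_E$ to compute
\begin{equation*}
    \dual{(A^*\circ j_E)(x)}{y}=\dual{\widehat{x}}{Ay}=\overline{\dual{Ay}{x}},\qquad x,y\in E.
\end{equation*}

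Next, I would compare this with the plain pairing $\dual{Ax}{y}$ for $x,y\in E$. The equality $A=A^*\circ j_E$ holds if and only if, for every $x\in E$, the anti-linear functionals $Ax$ and $(A^*\circ j_E)(x)$ coincide on $E$; by the above computation this amounts to $\dual{Ax}{y}=\overline{\dual{Ay}{x}}$ for all $x,y\in E$, which is precisely \eqref{E:symmetry}. This shows $\text{(i)}\Leftrightarrow\text{(ii)}$ in a single direct calculation, giving both implications simultaneously.

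The only real obstacle is bookkeeping: one must keep careful track of which space each pairing lives in (antidualities between $E$ and $\anti{E}$, and between $\anti{E}$ and $\bidual{E}$) and of the conjugation introduced by $j_E$. Once these are made explicit, no further analysis is needed; continuity of $A$ plays no role beyond ensuring that $A^*$ is defined on all of $\bidual{E}$, and no appeal to Theorem \ref{T:Hellinger} or to positivity is required.
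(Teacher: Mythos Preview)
Your proof is correct and follows essentially the same route as the paper: both arguments compute $\dual{(A^*\circ j_E)(x)}{y}=\dual{j_E(x)}{Ay}=\overline{\dual{Ay}{x}}$ and compare with $\dual{Ax}{y}$. The only cosmetic difference is that you handle both implications simultaneously by phrasing the equivalence as an identity of pairings, whereas the paper writes out (i)$\Rightarrow$(ii) and remarks that the converse is similar.
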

\begin{proof}
    Assume first that $A$ is symmetric. Then for any $x,y\in E$ we find that
    \begin{align*}
        \dual{Ax}{y}=\overline{\dual{Ay}{x}}=\dual{j_E(x)}{Ay}=\dual{(A^*\circ j_E)(x)}{y}.
    \end{align*}
    Consequently $Ax=(A^*\circ j_E)(x)$, that is, $A$ is selfadjoint. That (ii) implies (i) is proved similarly.
\end{proof}

In the view of the above result one can reformulate Theorem \ref{T:Hellinger} as follows: an everywhere defined symmetric operator of the Banach space $E$ into $\anti{E}$ is automatically continuous and selfadjoint in the sense of Proposition \ref{proposition:symmm}.

Our last claim is to provide a generalized version of the so called operator Schwarz inequality, which plays a key role in our main result Theorem \ref{T:Krein-Neumann}.
\begin{lemma}\label{L:op-Schwarz}
    Let $E$ be a Banach space and let $A\in\bee$ be a continuous linear operator. If $A$ is positive, then
    \begin{equation}\label{E:op-Schwarz}
    \|Ax\|^2\leq\|A\|\dual{Ax}{x},\qquad\mbox{for all $x\in E$}.
\end{equation}
\end{lemma}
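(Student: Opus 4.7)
The plan is to combine the Cauchy--Schwarz inequality \eqref{E:sip-Schwarz} for the semi-inner product induced by $A$ with the definition of the operator norm on $\anti{E}$ and the trivial bound $\dual{Ay}{y}\leq\|A\|\|y\|^2$.

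First I would recall from the preceding discussion that, since $A$ is positive (and hence symmetric by the calculation in the paragraph preceding \eqref{E:sip-Schwarz}), the map $(x,y)\mapsto\dual{Ax}{y}$ is a semi-inner product on $E$, giving the inequality
\begin{equation*}
    \abs{\dual{Ax}{y}}^2\leq \dual{Ax}{x}\dual{Ay}{y},\qquad x,y\in E.
\end{equation*}
Next I would use the fact that $\|Ax\|$ is the supremum of $\abs{\dual{Ax}{y}}$ over the unit ball of $E$, to get
\begin{equation*}
    \|Ax\|^2=\sup_{\|y\|\leq 1}\abs{\dual{Ax}{y}}^2\leq \dual{Ax}{x}\cdot\sup_{\|y\|\leq 1}\dual{Ay}{y}.
\end{equation*}

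Finally I would estimate the second supremum by the continuity of $A$, writing
\begin{equation*}
    \dual{Ay}{y}\leq \abs{\dual{Ay}{y}}\leq \|Ay\|\|y\|\leq \|A\|\|y\|^2\leq \|A\|,\qquad \|y\|\leq 1,
\end{equation*}
which when combined with the previous display gives exactly \eqref{E:op-Schwarz}.

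I do not expect any real obstacle here; the whole argument is a two-line application of the Schwarz inequality already established in \eqref{E:sip-Schwarz} together with the definition of $\|\cdot\|$ on $\anti{E}$. The only thing to be slightly careful about is that $\dual{Ay}{y}$ is a nonnegative real number (so dropping the absolute value is legitimate) and that the supremum on the right is genuinely bounded by $\|A\|$, both of which are immediate.
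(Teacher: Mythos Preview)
Your proof is correct and follows essentially the same approach as the paper: both arguments compute $\|Ax\|^2$ as a supremum of $\abs{\dual{Ax}{y}}^2$ over the unit ball, apply the Cauchy--Schwarz inequality \eqref{E:sip-Schwarz}, and then bound $\dual{Ay}{y}$ by $\|A\|$ for $\|y\|\leq 1$. You are slightly more explicit than the paper in spelling out the last bound via $\dual{Ay}{y}\leq\|A\|\|y\|^2$, but the content is identical.
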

\begin{proof}
For any $x\in E$  the Cauchy--Schwarz inequality (\ref{E:sip-Schwarz}) yields
\begin{align*}
    \|Ax\|^2&=\sup\set[\big]{\abs{\dual{Ax}{y}}^2}{y\in E, \|y\|\leq1}\\
            &\leq \sup\set[\big]{\dual{Ax}{x}\dual{Ay}{y}}{y\in E, \|y\|\leq1}\\
            &\leq \|A\|\dual{Ax}{x},
\end{align*}
as it is claimed.
\end{proof}

\section{Main theorem on extensions of positive operators}\label{S:extension}

The following result is our main theorem in the paper.
It states that each positive operator with a Schwarz-type inequality has a smallest positive extension to the whole space, similar to the Hilbert space case (the so called \emph{Krein--von Neumann extension}, cf. \cite{Krein47a}, \cite{Krein47b}, \cite{Sebestyen83a} and \cite{Sebestyen93}).
The nontrivial part of the theorem is implication (iii)$\Rightarrow$(i).
In its proof we construct the smallest extension (see \eqref{E:pina}), and we will use the arguments of this construction in sections \ref{S:cum} and \ref{S:fuk}, as well.

\begin{theorem}\label{T:Krein-Neumann}
    Let $E$ be a complex Banach space, and let $A:E\supseteq\dom A\to \anti{E}$ be a positive operator.
    The following statements are equivalent:
    \begin{enumerate}[\upshape (i)]
      \item There exists a smallest positive extension $A_N\in \bee$ of $A$, that is, for any (bounded) positive extension $\widetilde{A}\in\bee$ of $A$ it follows that $A_N\leq \widetilde{A}$.
      \item There is a positive operator $\widetilde{A}\in \bee$ extending $A$.
      \item There is a constant $M\geq0$ such that
      \begin{equation}\label{E:extension}
        \|Ax\|^2\leq M\cdot\dual{Ax}{x},\qquad \mbox{for all $x\in\dom A.$}
      \end{equation}
    \end{enumerate}
Moreover, for any positive operator $A:E\supseteq\dom A\to\anti{E}$ which satisfies one (hence all) of the properties above, there exists a Hilbert space $\hil$ and a bounded operator $T\in\beh$ such that the bounded positive operator $T^*T$ extends $A$.
In particular: if $\dom A=E$, then $A=T^*T$.
\end{theorem}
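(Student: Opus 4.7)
The plan is to dispatch the easy implications first and then concentrate on the constructive implication (iii)$\Rightarrow$(i), which will also furnish the factorization $A_N=T^*T$.

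For (i)$\Rightarrow$(ii) I simply take $\widetilde A:=A_N$. For (ii)$\Rightarrow$(iii), applying Lemma~\ref{L:op-Schwarz} to the bounded positive extension $\widetilde A\in\bee$ gives, for every $x\in\dom A$,
\[
\|Ax\|^2=\|\widetilde Ax\|^2\leq \|\widetilde A\|\cdot\dual{\widetilde Ax}{x}=\|\widetilde A\|\cdot\dual{Ax}{x},
\]
so (iii) holds with $M:=\|\widetilde A\|$.

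The heart of the argument is the construction needed for (iii)$\Rightarrow$(i). On $\dom A$ I consider the semi-inner product $(x|y)_A:=\dual{Ax}{y}$, which is hermitian by the symmetry property \eqref{E:symmetry} of positive operators; its isotropic subspace $\set{x\in\dom A}{\dual{Ax}{x}=0}$ is the kernel of the semi-inner product by \eqref{E:sip-Schwarz}. Quotienting and completing yields a Hilbert space $\hila$ together with a canonical linear map $\pia:\dom A\to\hila$, $x\mapsto[x]$, with dense range. The Schwarz-type bound \eqref{E:extension} becomes $\|Ax\|^2\leq M\|\pia(x)\|_{\hila}^2$, so $\pia(x)\mapsto Ax$ is well defined and extends to a bounded linear operator $J:\hila\to\anti E$ with $\|J\|^2\leq M$. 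Now set $T:=J^*\circ j_E:E\to\hila$, identifying $\anti{\hila}$ with $\hila$ via Riesz (Remark~\ref{remark:exx2}) and using \eqref{E:antares}; unwinding the definitions gives $T^*=J$, so $A_N:=T^*T=J\circ T\in\bee$ is a bounded positive operator. To see that $A_N$ extends $A$, I verify that $Tx=\pia(x)$ for every $x\in\dom A$: using the symmetry \eqref{E:symmetry} of $A$, for each $y\in\dom A$,
\[
(Tx|\pia(y))_{\hila}=\overline{\dual{J\pia(y)}{x}}=\overline{\dual{Ay}{x}}=\dual{Ax}{y}=(\pia(x)|\pia(y))_{\hila},
\]
and density of $\pia(\dom A)$ in $\hila$ closes this step; hence $A_Nx=J(\pia(x))=Ax$, and in particular $A=T^*T$ whenever $\dom A=E$.

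The main obstacle is minimality: $A_N\leq\widetilde A$ for every positive extension $\widetilde A\in\bee$. The idea is to run the same construction for $\widetilde A$ (now with full domain $E$) to get a Hilbert space $\hil_{\widetilde A}$, a canonical map $\pi_{\widetilde A}:E\to\hil_{\widetilde A}$ and an operator $\widetilde T:E\to\hil_{\widetilde A}$ with $\widetilde T^*\widetilde T=\widetilde A$ and $\widetilde Tx=\pi_{\widetilde A}(x)$ for all $x\in E$. Since $(x|y)_A=(x|y)_{\widetilde A}$ for $x,y\in\dom A$, the assignment $\pia(x)\mapsto\pi_{\widetilde A}(x)$ is isometric on a dense subspace of $\hila$ and therefore extends to a linear isometry $V:\hila\to\hil_{\widetilde A}$. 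The computation from the previous paragraph, applied to both $A$ and $\widetilde A$, yields $(VTx-\widetilde Tx|V\pia(y))_{\hil_{\widetilde A}}=0$ for every $y\in\dom A$, so $VTx$ is the orthogonal projection of $\widetilde Tx$ onto the closed subspace $V(\hila)\subseteq\hil_{\widetilde A}$; since $V$ is isometric and this projection is a contraction,
\[
\dual{A_Nx}{x}=\|Tx\|_{\hila}^2=\|VTx\|_{\hil_{\widetilde A}}^2\leq\|\widetilde Tx\|_{\hil_{\widetilde A}}^2=\dual{\widetilde Ax}{x},
\]
which gives $A_N\leq\widetilde A$ and completes the plan.
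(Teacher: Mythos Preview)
Your argument is correct. The construction of $A_N$ is essentially the paper's: your quotient of $\dom A$ by the isotropic subspace of $(\cdot|\cdot)_A$ is canonically isometric to the paper's pre-Hilbert space $\ran A$ with $\sipa{Ax}{Ay}:=\dual{Ax}{y}$ via $[x]\leftrightarrow Ax$, and your $J$, $T=J^*\circ j_E$, and $A_N=T^*T$ coincide with the paper's objects (the paper also proves $(J^*\circ j_E)^*=J$ explicitly in \eqref{E:J*je*=J}).

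Where you genuinely diverge is in the minimality step. The paper derives the variational formula
\[
\dual{A_Nx}{x}=\sup\set[\big]{\abs{\dual{Ay}{x}}^2}{y\in\dom A,\ \dual{Ay}{y}\leq 1}
\]
and then compares this supremum with the analogous one for $\widetilde A$ (the latter being taken over all of $E$, hence at least as large). You instead build the isometry $V:\hila\to\hil_{\widetilde A}$ and observe that $VTx$ is the orthogonal projection of $\widetilde Tx$ onto $V(\hila)$, which immediately gives $\|Tx\|^2\leq\|\widetilde Tx\|^2$. The two arguments are dual to each other: your projection inequality is exactly what the supremum comparison encodes, once one recalls that $\|h\|^2=\sup\{|\sip{h}{k}|^2:\|k\|\leq1\}$ over a dense set. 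Your approach is slightly more geometric; the paper's formula has the advantage that it is reused verbatim later (in the proof of Theorem~\ref{T:funcext}) to identify $f_N(x^*x)$ explicitly.
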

\begin{proof}
The fact that (i) implies (ii) is obvious.

Assume that $\widetilde{A}$ is a bounded positive extension of $A$. According to Lemma \ref{L:op-Schwarz} we conclude that
\begin{align*}
    \|Ax\|^2=\|\widetilde{A}x\|^2\leq \|\widetilde{A}\|\dual{\widetilde{A}x}{x}=\|\widetilde{A}\|\dual{Ax}{x},
\end{align*}
for all $x\in\dom A$. Therefore (ii) implies (iii).

\emph{The missing implication, construction of $A_N$}:

Suppose that \eqref{E:extension} is true.
We will equip the range space $\ran A$ of $A$ with a pre-Hilbert space structure as follows: for $x,y\in\dom A$ we set
\begin{equation}\label{E:sip_A}
    \sipa{Ax}{Ay}:=\dual{Ax}{y}.
\end{equation}
First we must show that $\sipa{\cdot}{\cdot}$ is well defined: for if $Ax=Ax'$ and $Ay=Ay'$ hold for some vectors $x,x',y,y'$ from $\dom A$, then we conclude that
\begin{equation*}
    \dual{Ax}{y}=\dual{Ax'}{y}=\overline{\dual{Ay}{x'}}=\overline{\dual{Ay'}{x'}}=\dual{Ax'}{y'},
\end{equation*}
indeed. Next we show that \eqref{E:sip_A} defines an inner product.
It is readily seen from the positivity of $A$ that $\sipa{\cdot}{\cdot}$ is a semi inner product.
Furthermore, if $\dual{Ax}{x}=0$ holds for some vector $x\in\dom A$, then inequality \eqref{E:extension} gives $\|Ax\|^2=0$. Hence $\sipa{Ax}{Ax}=0$ implies $Ax=0$, as claimed.

Let us denote by $\hila$  the auxiliary Hilbert space defined as the completion of $\ran A$ equipped by the inner product \eqref{E:sip_A}.
Let $J$ stand for the natural embedding operator of $\ran A\subset H_A$ into $\anti{E}$, that is $J$ is defined by the identification
\begin{equation}\label{E:J}
    J(Ax):=Ax,\qquad x\in\dom A.
\end{equation}
Note immediately that inequality $\eqref{E:extension}$ expresses just that $J$ is continuous with norm bound $\sqrt{M}$:
\begin{equation*}
    \|J(Ax)\|^2=\|Ax\|^2\leq M\cdot\dual{Ax}{x}=M\cdot\sipa{Ax}{Ax},\qquad x\in\dom A.
\end{equation*}
Consequently, $J$ admits a unique norm preserving extension to $\hila$; we denote that operator by $J$ as well.
The (anti-)adjoint $J^*$ of $J$ therefore acts as an operator from $\bidual{E}$ into $\hila$, satisfying the following canonical extension property:
\begin{align}\label{E:J_adjoint}
    (J^*\circ j_E)(x)=Ax\in\hila,\qquad x\in\dom A.
\end{align}
Here $j_E$ stands for the natural embedding of $E$ into $\bidual{E}$.
Indeed, by taking $x$ from $\dom A$, we have for all $y\in\dom A$ that
\begin{align*}
    \sipa{(J^*\circ j_E)(x)}{Ay}=\dual{j_E(x)}{J(Ay)}=\overline{\dual{Ay}{x}}=\overline{\sipa{Ay}{Ax}}=\sipa{Ax}{Ay},
\end{align*}
whence we conclude that $(J^*\circ j_E)(x)-Ax$ is orthogonal to the dense linear manifold $\ran A$ of $\hila$, which yields \eqref{E:J_adjoint}.
Note also that $J^*\circ j_E$ acts as a bounded operator from $E$ into $\hila$, thus we conclude that its adjoint  $(J^*\circ j_E)^*$ is an operator from $\hila$ into $\anti{E}$, just as the operator $J$. We state that
\begin{equation}\label{E:J*je*=J}
    (J^*\circ j_E)^*=J.
\end{equation}
According to the continuity of the operators under consideration, in order that  \eqref{E:J*je*=J} be valid it suffices to show it on the dense set $\ran A\subseteq\hila$.
Considering a vector $y\in E$, by means of the arguments in Example \ref{Ex:example2} and Remark \ref{remark:exx2} for all $x\in\dom A$ we have that
\begin{align*}
    \dual{(J^*\circ j_E)^*(Ax)}{y}=\overline{\sipa{(J^*\circ j_E)y}{Ax}}=\overline{\dual{j_E(y)}{J(Ax)}}=\dual{J(Ax)}{y},
\end{align*}
which yields \eqref{E:J*je*=J}.
By letting
\begin{equation}\label{E:pina}
T:=(J^*\circ j_E);\ A_N:=T^*T,
\end{equation}
we conclude that the positive operator $A_N\in\bee$ extends $A$: for if $x\in\dom A$, due to identities \eqref{E:J_adjoint} and \eqref{E:J*je*=J} we infer that
\begin{equation*}
   A_Nx=T^*Tx=T^*(J^*\circ j_E)x=T^*(Ax)=J(Ax)=Ax.
\end{equation*}
Thus $A_N=T^*T$ is a positive extension, and this also proves the last statement of the theorem.

We only have to show that $A_N$ is the smallest amongst the positive extensions of $A$.
First of all observe that for each $x\in E$ we have that
    \begin{equation}\label{E:quadratic}
        \dual{A_Nx}{x}=\sup\set[\big]{\abs{\dual{Ay}{x}}^2}{y\in \dom A, \dual{Ay}{y}\leq1}.
    \end{equation}
    Indeed, by virtue of the density of $\ran A$ in the auxiliary Hilbert space $\hila$, we obtain that
    \begin{align*}
        \dual{A_Nx}{x}&=\sipa{(J^*\circ j_E)(x)}{(J^*\circ j_E)(x)}\\
                      &=\sup\set[\big]{\abs{\sipa{(J^*\circ j_E)(x)}{Ay}}^2}{y\in\dom A, \sipa{Ay}{Ay}\leq1}\\
                      &=\sup\set[\big]{\abs{\dual{j_E(x)}{J(Ay)}}^2}{y\in\dom A, \dual{Ay}{y}\leq1}\\
                      &=\sup\set[\big]{\abs{\dual{Ay}{x}}^2}{y\in \dom A, \dual{Ay}{y}\leq1}.
    \end{align*}
    Consider now a positive extension $\widetilde{A}$ of $A$.
    By repeating the construction to $\widetilde{A}$, we obtain immediately that $(\widetilde{A})_N=\widetilde{A}$, and therefore, by virtue of identity \eqref{E:quadratic}, for any $x$ from $E$ we have at once
    \begin{align*}
        \dual{\widetilde{A}x}{x}&=\sup\set[\big]{\abs{\dual{\widetilde{A}y}{x}}^2}{y\in E, \dual{\widetilde{A}y}{y}\leq1}\\
                                &\geq\sup\set[\big]{\abs{\dual{\widetilde{A}y}{x}}^2}{y\in \dom A, \dual{\widetilde{A}y}{y}\leq1}\\
                                &=\sup\set[\big]{\abs{\dual{Ay}{x}}^2}{y\in \dom A, \dual{Ay}{y}\leq1}\\
                                &=\dual{A_Nx}{x},
    \end{align*}
    which completes the proof.
\end{proof}

As we mentioned, throughout the remainder of the paper we shall make use of the construction above, and the positive extension $A_N:=T^*T$ will be referred to as the \emph{Krein--von Neumann extension} of the positive operator $A$ (analogously to the Hilbert space case).
\bigskip

We have seen in Example \ref{Ex:example2} that for a bounded operator $T$ of $E$ into a Hilbert space $\hil$ the usual $C^*$-property is valid, that is to say,
\begin{equation*}
    \|T^*T\|=\|T\|^2=\|T^*\|^2.
\end{equation*}
Therefore the norm of the Krein--von Neumann extension can be easily determined, as stated in the following

\begin{proposition}\label{P:A_N_norm}
Assume that $A$ is a positive operator of $E$ into $\anti{E}$ with domain $\dom A$ that satisfies the conditions of Theorem \ref{T:Krein-Neumann}.
The norm of the Krein--von Neumann extension $A_N$ is then calculated as follows:
\begin{equation}\label{E:A_N_norm}
    \|A_N\|=\inf\set[\big]{M\geq0}{\|Ax\|^2\leq M\cdot\dual{Ax}{x} \quad\mbox{for all $x\in\dom A$}}.
\end{equation}
\end{proposition}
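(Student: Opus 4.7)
The plan is to denote the infimum on the right-hand side of \eqref{E:A_N_norm} by $M_0$ and to establish the two inequalities $M_0\leq \|A_N\|$ and $\|A_N\|\leq M_0$ separately, each via a tool already in place.

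For $M_0\leq \|A_N\|$, I would apply Lemma~\ref{L:op-Schwarz} to the bounded positive operator $A_N\in\bee$: for every $y\in E$ one has $\|A_Ny\|^2\leq \|A_N\|\cdot\dual{A_Ny}{y}$. Restricting to $y\in\dom A$, where $A_Ny=Ay$ (and hence $\dual{A_Ny}{y}=\dual{Ay}{y}$), this reduces to $\|Ay\|^2\leq \|A_N\|\cdot\dual{Ay}{y}$. Thus the value $M=\|A_N\|$ belongs to the set over which $M_0$ is taken as an infimum, giving $M_0\leq \|A_N\|$.

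For the reverse inequality $\|A_N\|\leq M_0$, I would revisit the factorization $A_N=T^*T$ with $T:=J^*\circ j_E$ produced in the proof of Theorem~\ref{T:Krein-Neumann}. The calculation there shows that for any admissible $M$, the bound $\|J(Ax)\|^2\leq M\cdot\sipa{Ax}{Ax}$ holds on the dense subspace $\ran A\subseteq\hila$, hence by continuity the extended operator $J$ on the whole of $\hila$ satisfies $\|J\|^2\leq M$; taking the infimum gives $\|J\|^2\leq M_0$. Identity \eqref{E:J*je*=J} reads $T^*=J$, so $\|T\|=\|T^*\|=\|J\|$, and the $C^*$-property recorded in Example~\ref{Ex:example2} then yields $\|A_N\|=\|T^*T\|=\|T\|^2=\|J\|^2\leq M_0$.

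I do not anticipate a serious obstacle: the proposition is largely bookkeeping that reads off the operator norm of $J$ in two different ways, once via Lemma~\ref{L:op-Schwarz} applied to $A_N$ itself, and once via the defining inequality for $M_0$ composed with the $C^*$-property. The only step calling for a small amount of care is the density-based passage from the inequality $\|J(Ax)\|^2\leq M\cdot\sipa{Ax}{Ax}$ on $\ran A$ to the global norm bound $\|J\|^2\leq M$ on all of $\hila$, which is a routine continuity/completion argument already implicit in the construction of $J$.
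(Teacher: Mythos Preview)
Your proof is correct and follows essentially the same route as the paper: both hinge on the identity $\|A_N\|=\|J\|^2$, obtained from \eqref{E:J*je*=J} together with the $C^*$-property. The only cosmetic difference is that the paper reads off both inequalities at once from the equality $\|J\|^2=M_0$ (the operator norm of $J$ computed on the dense set $\ran A$), whereas you split into two inequalities and invoke Lemma~\ref{L:op-Schwarz} separately for $M_0\leq\|A_N\|$.
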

\begin{proof}
    On the one hand, according to the $C^*$-property above and (\ref{E:J*je*=J}) we have that
    \begin{equation*}
        \|A_N\|=\|J^*\circ j_E\|^2=\|(J^*\circ j_E)^*\|^2=\|J\|^2.
    \end{equation*}
    On the other hand,
    \begin{align*}
        \|J\|^2&=\inf\set[\big]{M\geq0}{\|J(Ax)\|^2\leq M\cdot\sipa{Ax}{Ax}\quad\mbox{for all $x\in\dom A$}}\\
               &=\inf\set[\big]{M\geq0}{\|Ax\|^2\leq M\cdot\dual{Ax}{x} \quad\mbox{for all $x\in\dom A$}},
    \end{align*}
    as stated.
\end{proof}

\section{Compact and closed range extensions of positive operators}\label{S:cum}

Our purpose in this section is to discuss two naturally arising problems: we give necessary and sufficient conditions for a positive operator to have compact or closed range extension to the whole Banach space.
In both cases, it turns out that the existence of such kind of positive extension is equivalent with the compactness, and the closed range property of the Krein--von Neumann extension $A_N$ appeared in our Theorem \ref{T:Krein-Neumann}, respectively.

We start out by characterizing those positive operators which admit compact positive extensions.
\begin{theorem}\label{T:compuct}
    Let $A$ be a positive operator of $E$ into the topological antidual $\anti{E}$, with domain $\dom A$. The following statements are equivalent:
    \begin{enumerate}[\upshape (i)]
      \item There is a compact positive operator $\widetilde{A}\in\bee$ which extends $A$.
      \item The Krein--von Neumann extension $A_N$  of $A$ (exists and) is compact.
      \item The set
      \begin{equation}\label{E:compact}
        \set{Ax}{x\in\dom A, \dual{Ax}{x}\leq1}
      \end{equation}
      is totally bounded in $\anti{E}$.
    \end{enumerate}
\end{theorem}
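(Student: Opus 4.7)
The plan is to establish the cycle (ii)$\Rightarrow$(i)$\Rightarrow$(iii)$\Rightarrow$(ii). The trivial implication is (ii)$\Rightarrow$(i), since $A_N$ is itself a bounded positive extension of $A$ and is compact by hypothesis.

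For (iii)$\Rightarrow$(ii), which is the main step, I would use the factorization $A_N=T^*T$ with $T=J^*\circ j_E$ and $T^*=J$ constructed in the proof of Theorem~\ref{T:Krein-Neumann}. By \eqref{E:sip_A}, $\dual{Ax}{x}=\sipa{Ax}{Ax}=\|Ax\|_{\hila}^2$ for $x\in\dom A$, so the set in \eqref{E:compact} coincides with the $J$-image of the closed unit ball of $\ran A$ inside $\hila$. Since $\ran A$ is dense in $\hila$ and $J$ is continuous, this image is totally bounded in $\anti{E}$ exactly when $J(B_{\hila})$ is, i.e., when $J:\hila\to\anti{E}$ is compact. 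By Schauder's theorem, $J$ compact forces $J^*$ compact, whence $T=J^*\circ j_E$ is compact by composition with the bounded embedding $j_E$; then $A_N=T^*T$ is compact as a composition of a compact and a bounded operator.

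Finally, for (i)$\Rightarrow$(iii), given a compact positive extension $\widetilde A\in\bee$, Theorem~\ref{T:Krein-Neumann} applied to $\widetilde A$ (which has $\dom\widetilde A=E$) yields a factorization $\widetilde A=S^*S$ with some bounded $S:E\to K$ into an auxiliary Hilbert space $K$. Compactness of $\widetilde A=S^*S$ forces compactness of $S$ (by the estimate described below), and hence of $S^*$ by Schauder's theorem. For any $y\in\dom A$ with $\dual{Ay}{y}\le 1$, one has $\|Sy\|_K^2=\dual{\widetilde Ay}{y}=\dual{Ay}{y}\le 1$, so $Ay=\widetilde Ay=S^*(Sy)$ belongs to $S^*(B_K)$, which is totally bounded in $\anti{E}$; hence so is the set in \eqref{E:compact}.

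The main obstacle is the recurring step ``$T^*T$ compact $\Rightarrow$ $T$ compact'', which bridges the Hilbert-space geometry of the auxiliary space and the Banach-space target. I would handle it by the standard estimate
\[
\|Tx_n-Tx_m\|_{\hila}^2=\dual{T^*T(x_n-x_m)}{x_n-x_m}\le\|T^*T(x_n-x_m)\|\cdot\|x_n-x_m\|,
\]
applied to a bounded sequence $(x_n)\subset E$: any convergent subsequence of $(T^*Tx_n)$ in $\anti{E}$ immediately yields a Cauchy, hence convergent, subsequence of $(Tx_n)$ in $\hila$. Beyond this transfer, everything reduces to routine verification of the identifications already set up in Theorem~\ref{T:Krein-Neumann} together with Schauder's theorem.
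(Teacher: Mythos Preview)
Your proof is correct and follows essentially the same approach as the paper: both identify the set in \eqref{E:compact} with the $J$-image of the unit ball of $\ran A\subset\hila$, use the quadratic-form estimate $\|Tx\|^2=\dual{T^*Tx}{x}$ to pass compactness from $T^*T$ to $T$, and invoke Schauder's theorem to close the loop. The only organisational difference is in (i)$\Rightarrow$(iii): the paper uses the minimality $A_N\le\widetilde{A}$ to show directly that $J^*\circ j_E$ is compact (hence $J$ by Schauder), whereas you factorize $\widetilde{A}=S^*S$ through its own auxiliary Hilbert space and argue that the set \eqref{E:compact} sits inside $S^*(B_K)$; both routes rest on the identical estimate and are equally valid.
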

\begin{proof}
    By recalling the proof of Theorem \ref{T:Krein-Neumann} \eqref{E:J}, one obtains readily that the set \eqref{E:compact} is precisely the image of the unit ball of the dense linear manifold $\ran A$ of the Hilbert space $\hila$ under the mapping $J$.
    Thus assertion (iii) expresses just that $J$ is compact.
    Since $A_N=J(J^*\circ j_E)$, we have that (iii) implies (ii). That (ii) implies (i) goes without saying. Finally, assume that $\widetilde{A}$ is a compact positive extension of $A$.
    According to Theorem \ref{T:Krein-Neumann}, we have then $A_N\leq \widetilde{A}$.
    Let us consider a bounded sequence $\seq{x}$ of $E$. Since $\widetilde{A}$ is compact, there is a subsequence $(x_{n_k})_{k\in\dupN}$ such that $(\widetilde{A}x_{n_k})_{k\in\dupN}$ converges in $\anti{E}$. Consequently, by letting $j,k\to\infty$ we conclude that
    \begin{gather*}
        \sipa[\big]{(J^*\circ j_E)(x_{n_k}-x_{n_j})}{(J^*\circ j_E)(x_{n_k}-x_{n_j})}=\dual{A_N(x_{n_k}-x_{n_j})}{x_{n_k}-x_{n_j}}\\
                                                                   \leq \dual{\widetilde{A}(x_{n_k}-x_{n_j})}{x_{n_k}-x_{n_j}}\to0,
    \end{gather*}
    whence we have that the sequence $((J^*\circ j_E)x_{n_k})_{k\in\dupN}$ converges in $\hila$. This means that $J^*\circ j_E$ is compact, and so is its adjoint $(J^*\circ j_E)^*=J$, by virtue of the Schauder theorem.
    Consequently, (i) implies (iii).
\end{proof}

We turn now to the case of closed range extensions.
Proposition \ref{proposition:symmm} states that if $A\in\bee$ is an operator with the symmetric property \eqref{E:symmetry}, then $A$ automatically satisfies $A=A^*\circ j_E$.
(In the case when $E$ is reflexive, that could also be expressed by saying that $A$ is selfadjoint, cf. \cite{matolcsi}.)
Thus we conclude the following inclusion on the ranges:
\begin{equation*}
    \ran A\subseteq\ran A^*.
\end{equation*}
Moreover, if $A\in\bee$ is positive, and $T$ is an any operator of $E$ into a Hilbert space $H$ such that $T^*T=A$, then, by the range characterization of Shmul'yan \cite{Smulian} we obtain that
\begin{align*}
    \ran A^*&=\set[\big]{z\in \anti{E}}{\exists m_z>0:\ \abs{\dual{z}{x}}\leq m_z\cdot\|Ax\|\ \forall x\in E}\\
            &=\set[\big]{z\in \anti{E}}{\exists m_z>0:\ \abs{\dual{z}{x}}\leq m_z\cdot\|T^*Tx\|\ \forall x\in E}\\
            &\subseteq \set[\big]{z\in \anti{E}}{\exists m'_z>0:\ \abs{\dual{z}{x}}\leq m'_z\cdot\|Tx\|\ \forall x\in E}\\
            &=\ran T^*.
\end{align*}
Therefore we have the following line of range inclusions:
\begin{equation}\label{E:rangeinclusions}
    \ran A\subseteq\ran A^*\subseteq\ran T^*.
\end{equation}
Note that there are no equalities in \eqref{E:rangeinclusions} in general, unless either of the ranges above is closed:
\begin{proposition}\label{P:closedrange}
    Let $A\in\bee$ be a positive operator and let $T$ be a bounded operator of $E$ into a Hilbert space $H$ such that $T^*T=A$. Let us suppose that either of the following conditions is satisfied:
    \begin{enumerate}[\upshape (a)]
      \item $A$ has closed range;
      \item $T$ has closed range.
    \end{enumerate}
    Then both range inclusions of \eqref{E:rangeinclusions} become equalities. In particular, the properties (a) and (b) are equivalent.
\end{proposition}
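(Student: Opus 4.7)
The plan is to prove that (b) implies all three ranges coincide, then to reduce (a) to (b). A preliminary observation I would use throughout is the kernel identity $\ker A=\ker T$: the inclusion $\ker T\subseteq\ker A$ is immediate from $A=T^*T$, while $Ax=0$ forces $\|Tx\|^2=\dual{Ax}{x}=0$, hence $\ker A\subseteq\ker T$. This identity will let me transfer a bounded-lifting property between $A$ and $T$.

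Assume first that $\ran T$ is closed in $H$. Take any $z=T^*w\in\ran T^*$ and use the orthogonal decomposition $H=\ran T\oplus\ker T^*$ (closedness of $\ran T$ is what makes this direct sum valid, since $(\ran T)^{\perp}=\ker T^*$) to write $w=u+v$ with $u\in\ran T$ and $v\in\ker T^*$. Then $z=T^*u$, and since $u=Tx$ for some $x\in E$ we get $z=T^*Tx=Ax\in\ran A$. Thus $\ran T^*\subseteq\ran A$, and combining with the chain \eqref{E:rangeinclusions} already established gives all three ranges equal. In particular (b) already forces $\ran A$ to be closed.

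For the reverse direction, suppose $\ran A$ is closed. Then $\ran A$ is a Banach space, and Banach's open mapping theorem applied to the surjection $A:E\to\ran A$ yields a constant $c>0$ such that every $y\in\ran A$ admits a preimage $x\in E$ with $Ax=y$ and $\|x\|\leq c\|y\|$. For an arbitrary $x_1\in E$, applying this to $y=Ax_1$ produces $x_0\in E$ with $Ax_0=Ax_1$ and $\|x_0\|\leq c\|Ax_1\|\leq c\|T\|\cdot\|Tx_1\|$; the kernel identity then gives $Tx_0=Tx_1$. Hence the induced map $E/\ker T\to\ran T$ is bounded below, so $\ran T$ is the image of a Banach space under a bounded-below linear map and is therefore closed in $H$, establishing (b). Combined with the previous paragraph, either (a) or (b) produces the claimed equalities, and the two conditions are equivalent. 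The decisive (and least routine) step is the orthogonal splitting in the middle paragraph: it is precisely there that the Hilbert structure of $H$ is essential, since it lets me replace an abstract element of $\ran T^*$ by one of the form $T^*Tx$, a move that would be unavailable in a purely Banach target.
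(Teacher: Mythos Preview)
Your proof is correct. The handling of (b) via the orthogonal decomposition $H=\ran T\oplus\ker T^*$ is exactly how the paper proceeds. For (a)$\Rightarrow$(b), however, you take a different route: you use the open mapping theorem together with your kernel identity $\ker A=\ker T$ to show that the induced map $E/\ker T\to H$ is bounded below, and conclude directly that $\ran T$ is closed. The paper instead first observes (for arbitrary $T$) that $\ran T^*T$ is dense in $\ran T^*$, since $H=\overline{\ran T}\oplus\ker T^*$ gives $\ran T^*=T^*\langle\overline{\ran T}\rangle\subseteq\overline{\ran T^*T}$; then closedness of $\ran A=\ran T^*T$ forces $\ran T^*=\ran A$ to be closed, and the Banach closed range theorem yields that $\ran T$ is closed. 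The paper's approach produces the equality $\ran T^*=\ran A$ as a byproduct on the way to (b), whereas you obtain the equalities only after reducing to (b); on the other hand, your argument bypasses the closed range theorem entirely and is self-contained once the open mapping theorem is granted.
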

\begin{proof}
    First of all note that $\ker T^*=\ran T^{\perp}$ holds for any operator $T\in\beh$.
    Consequently, if $T$ has closed range,  the orthogonal decomposition $H=\ran T\oplus\ker T^*$ yields
    \begin{align*}
        \ran T^*=T^*\langle\ran T+\ker T^*\rangle=T^*\langle\ran T\rangle=\ran T^*T.
    \end{align*}
    Moreover, for any  operator $T\in\beh$ we have that $\ran T^*T$ is dense in $\ran T^*$.
    Indeed,
    \begin{align*}
        \ran T^*=T^*\langle\overline{\ran T}+\ker T^*\rangle=T^*\langle\overline{\ran T}\rangle\subseteq \overline{T^*\langle\ran T\rangle}=\overline{\ran T^*T}.
    \end{align*}
    Consequently, if $A$ has closed range, then $T^*$ has closed range too.
    In the view of the Banach closed range theorem, the range of $T$ is closed, as well.
\end{proof}

We are now in the position to give a characterization of those positive operators which admit any bounded positive extension with the closed range property; cf. also \cite{Tarcsay,Tarcsay2011,Tarcsay_Banach}:
\begin{theorem}\label{T:closedrangeext}
     Let $A$ be a positive operator of $E$ into the topological antidual $\anti{E}$, with domain $\dom A$.
     The following statements are equivalent:
    \begin{enumerate}[\upshape (i)]
      \item There is a closed range positive operator $\widetilde{A}\in\bee$ which extends $A$.
      \item The Krein--von Neumann extension $A_N$  of $A$ (exists and) has closed range.
      \item There are two nonnegative constants $M, M'\geq0$ such that
      \begin{equation}\label{E:closedrange}
        \|Ax\|^2\leq M\cdot\dual{Ax}{x}\leq M'\cdot \|Ax\|^2\qquad\mbox{for all $x\in\dom A$}.
      \end{equation}
    \end{enumerate}
\end{theorem}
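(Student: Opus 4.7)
The plan is to prove the chain (ii)$\Rightarrow$(i)$\Rightarrow$(iii)$\Rightarrow$(ii). The implication (ii)$\Rightarrow$(i) is trivial since $A_N$ itself is a closed-range positive extension.

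For (i)$\Rightarrow$(iii), suppose $\widetilde{A}\in\bee$ is a closed-range positive extension of $A$. The first inequality is immediate from Lemma \ref{L:op-Schwarz}:
\begin{equation*}
\|Ax\|^2=\|\widetilde{A}x\|^2\leq\|\widetilde{A}\|\cdot\dual{\widetilde{A}x}{x}=\|\widetilde{A}\|\cdot\dual{Ax}{x},\qquad x\in\dom A,
\end{equation*}
so $M=\|\widetilde{A}\|$ works. For the second inequality, I would use Theorem \ref{T:Krein-Neumann} to write $\widetilde{A}=T^*T$ for some Hilbert space $H$ and some $T\in\beh$, and then invoke Proposition \ref{P:closedrange} to see that $T$ has closed range. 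By the Banach closed range theorem, $\ran T^*$ is closed as well, and since $\ker T^*=(\ran T)^{\perp}$ the restriction $T^*|_{\ran T}$ is an injective bounded linear map onto $\ran T^*$; the open mapping theorem then yields a constant $c>0$ with $\|T^*y\|\geq c\|y\|$ for all $y\in\ran T$. Substituting $y=Tx$ gives $\|Ax\|^2=\|T^*Tx\|^2\geq c^2\|Tx\|^2=c^2\dual{Ax}{x}$, so the second inequality holds with $M'=c^{-2}$.

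For the main implication (iii)$\Rightarrow$(ii), I would reuse the Krein--von Neumann construction from the proof of Theorem \ref{T:Krein-Neumann}. The first inequality guarantees that $A_N=T^*T$ exists, where $T=J^*\circ j_E\colon E\to\hila$ and $J\colon\hila\to\anti{E}$ is the natural continuous embedding determined by $J(Ax)=Ax$ for $x\in\dom A$, with $T^*=J$ by identity \eqref{E:J*je*=J}. The crucial observation is that the second inequality translates precisely to the statement that $J$ is bounded below on the dense subspace $\ran A\subseteq\hila$:
\begin{equation*}
\|Ax\|_{\hila}^2=\dual{Ax}{x}\leq M'\cdot\|J(Ax)\|^2,\qquad x\in\dom A.
\end{equation*}
By continuity this extends to all of $\hila$. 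Applied to $\xi=Tx$ for arbitrary $x\in E$, it yields $\dual{A_Nx}{x}=\|Tx\|_{\hila}^2\leq M'\cdot\|JTx\|^2=M'\cdot\|A_Nx\|^2$, which says that $T^*=J$ is bounded below by $1/\sqrt{M'}$ on $\ran T$, and by continuity also on $\overline{\ran T}$. Since $\hila=\overline{\ran T}\oplus\ker T^*$ and $T^*$ vanishes on $\ker T^*$, the range $\ran T^*=T^*(\overline{\ran T})$ is closed, being the image of a closed subspace under a bounded-below map. The Banach closed range theorem then gives $\ran T$ closed, and Proposition \ref{P:closedrange} propagates closedness to $\ran A_N=\ran T^*T$.

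The step I expect to require the most care is the final implication, where one must identify the second inequality in (iii) with the bounded-below property of the embedding $J\colon\hila\to\anti{E}$ and then transfer closedness from $\ran T^*$ to $\ran T$ and on to $\ran A_N$ through Proposition \ref{P:closedrange}; once this geometric content of the inequality is recognized, the rest is routine Hilbert-space bookkeeping.
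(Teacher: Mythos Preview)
Your proof is correct and follows essentially the same route as the paper: both arguments cycle through (ii)$\Rightarrow$(i)$\Rightarrow$(iii)$\Rightarrow$(ii), use Lemma \ref{L:op-Schwarz} and a factorization $\widetilde{A}=T^*T$ for (i)$\Rightarrow$(iii), and for (iii)$\Rightarrow$(ii) identify the second inequality with a lower bound for $J$ and then push closedness through Proposition \ref{P:closedrange}. The only cosmetic differences are that for the second inequality in (i)$\Rightarrow$(iii) the paper quotes Embry's range theorem while you derive the same bound directly from the open mapping theorem, and for (iii)$\Rightarrow$(ii) the paper phrases the lower bound on $J$ as equivalence of the norms $\|\cdot\|$ and $\|\cdot\|_{_A}$ on $\ran A$ and checks $\ran J=\overline{\ran J}$ by a Cauchy-sequence argument, whereas you use the bounded-below language; neither variant buys anything the other does not.
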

\begin{proof}
   Assume first that the closed range positive operator $\widetilde{A}\in\bee$ extends $A.$ Then according to Theorem \ref{L:op-Schwarz}  the first inequality of \eqref{E:closedrange} comes true with $M=\|\widetilde{A}\|$. In order to prove the second inequality of \eqref{E:closedrange} let us consider the objects $\widetilde{H}_A$ and $\widetilde{J}$ which are associated to the positive operator $\widetilde{A}$, in accordance with the construction of the Krein--von Neumann extension  above.
   By letting $\widetilde{T}:=\widetilde{J}^*\circ j_E$ we have that $\widetilde{T}^*\widetilde{T}=\widetilde{A}$, and therefore Proposition \ref{P:closedrange} yields
   \begin{equation*}
    \ran \widetilde{T}^*=\ran \widetilde{A}^*,
   \end{equation*}
   so that Theorem 1 of Embry \cite{Embry} implies $\|\widetilde{T}^*x\|^2\leq \widetilde{M}\cdot\|\widetilde{A}x\|^2$ with a suitably chosen $\widetilde{M}\geq0$. Since $\widetilde{A}$ extends $A$, it follows that
   \begin{equation*}
    \dual{Ax}{x}=\dual{\widetilde{A}x}{x}=\|T^*x\|^2\leq \widetilde{M}\cdot\|\widetilde{A}x\|^2,
   \end{equation*}
   whence statement (iii) follows.

   Our next claim is to show that (iii) implies (ii). First of all observe that (iii) expresses precisely that for all $x\in\dom A$ the following inequalities hold:
   \begin{equation*}
    \|Ax\|^2\leq M\cdot\sipa{Ax}{Ax}\leq M'\cdot\|Ax\|^2.
   \end{equation*}
   In other words the following two mappings
   \begin{equation*}
    Ax\mapsto\|Ax\|\qquad\mbox{and}\qquad Ax\mapsto\sqrt{\sipa{Ax}{Ax}}=:\|Ax\|_{_A},\qquad x\in\dom A,
   \end{equation*}
   define equivalent norms of the linear subspace $\ran A$ of $\anti{E}$. Recall that $J$ is the closure of the operator defined by \eqref{E:J}. Therefore by \eqref{E:J*je*=J} we conclude that
   \begin{align*}
    &\ran (J^*\circ j_E)^*=\ran J\\
                         &=\set{\phi\in\anti{E}}{\exists \seq{x}\subset\dom A, \seq{Ax}~\mbox{converges in $\hila$}, Ax_n\to \phi ~\mbox{in $\anti{E}$}}\\
                         &=\set{\phi\in\anti{E}}{\exists \seq{x}\subset\dom A, \|A(x_n-x_m)\|_{_A}\to0, Ax_n\to \phi ~\mbox{in $\anti{E}$}}\\
                         &=\set{\phi\in\anti{E}}{\exists \seq{x}\subset\dom A, \|A(x_n-x_m)\|\to 0 , Ax_n\to \phi ~\mbox{in $\anti{E}$}}\\
                         &=\set{\phi\in\anti{E}}{\exists \seq{x}\subset\dom A, Ax_n\to \phi ~\mbox{in $\anti{E}$}}\\
                         &=\overline{\ran J}.
   \end{align*}
   Whence we obtain that $(J^*\circ j_E)^*$ has closed range, as well as the operator $A_N=(J^*\circ j_E)^*(J^*\circ j_E)$ by Proposition \ref{P:closedrange} and the Banach closed range theorem.
   The proof is therefore complete.
\end{proof}

As a remarkable Corollary we obtain an extension of a result due to Dixmier \cite{dixmier} stating that a bounded positive operator on a complex Hilbert space has closed range if and only if the operator and its unique positive square root have common ranges, see also \cite{fillmore} and \cite{Tarcsay2011}:
\begin{corollary}\label{C:closedrange}
    Let $E$ be Banach space, $A\in\bee$ a positive operator and $T$ a bounded operator of $E$ into a Hilbert space $\hil$ such that $A=T^*T$. The following statements are equivalent:
    \begin{enumerate}[\upshape (i)]
      \item $A$ has closed range in $\anti{E}$;
      \item $T$ has closed range in $\hil$;
      \item $T^*$ and $A$ have equal ranges in $\anti{E}$.
    \end{enumerate}
\end{corollary}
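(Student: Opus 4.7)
The plan is to lean heavily on Proposition \ref{P:closedrange}, which already packages most of the content; only one implication truly requires a new argument.

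First I would observe that Proposition \ref{P:closedrange} gives (i) $\Leftrightarrow$ (ii) immediately, since it states precisely that $A$ has closed range if and only if $T$ has closed range. Moreover, the same proposition asserts that under either of these equivalent conditions the inclusion chain \eqref{E:rangeinclusions} collapses to equalities, and in particular $\ran A = \ran T^*$. So (i) (equivalently (ii)) at once implies (iii).

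The only remaining step is (iii) $\Rightarrow$ (ii). Here I would exploit the orthogonal decomposition $\hil = \overline{\ran T} \oplus \ker T^*$, which is valid because $\ker T^* = \overline{\ran T}^{\perp}$ (using the identification of $\anti{\hil}$ with $\hil$ as in Remark \ref{remark:exx2}, the relation $\dual{T^*y}{x}=\sip{y}{Tx}$ makes this routine). Pick any $\eta\in\overline{\ran T}$; since $T^*$ vanishes on $\ker T^*$, we have $T^*\eta\in T^*\langle\overline{\ran T}\rangle=\ran T^*$. By hypothesis $\ran T^*=\ran A=\ran T^*T$, so there exists $x\in E$ with $T^*\eta=T^*(Tx)$. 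Thus $\eta-Tx\in\ker T^*$, while at the same time $\eta-Tx\in\overline{\ran T}$, so $\eta-Tx\in\overline{\ran T}\cap\ker T^*=\{0\}$. This shows $\eta=Tx\in\ran T$, hence $\overline{\ran T}\subseteq\ran T$ and $T$ has closed range.

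I do not anticipate a serious obstacle: the heart of the matter was already packaged in Proposition \ref{P:closedrange}, and the new implication (iii) $\Rightarrow$ (ii) reduces to a one-line orthogonality argument once the decomposition $\hil=\overline{\ran T}\oplus\ker T^*$ is in hand. The only mild subtlety is making sure this Hilbert-space decomposition transfers correctly through the antidual identification for $T^*$, but (as noted) this is immediate from \eqref{E:antares} and the convention of Remark \ref{remark:exx2}.
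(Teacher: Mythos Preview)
Your proof is correct, and for the nontrivial implication (iii) $\Rightarrow$ (ii) it follows a genuinely different and more elementary route than the paper's. The paper proves (iii) $\Rightarrow$ (i) by passing through the auxiliary Hilbert space $\hila$ and the embedding $J$ of Theorem \ref{T:Krein-Neumann}: it invokes Embry's range--inclusion theorem to get $\ran A=\ran T^*=\ran(J^*\circ j_E)^*=\ran J$, then argues that $J$ and its restriction to $\ran A\subseteq\hila$ share both range and (trivial) kernel, forcing $\ran A=\hila$; from this it deduces that $J^*\circ j_E$ has closed range and finishes via the Banach closed range theorem. Your argument bypasses $\hila$, $J$, Embry's theorem and the closed range theorem altogether: the orthogonal decomposition $\hil=\overline{\ran T}\oplus\ker T^*$ together with the hypothesis $\ran T^*=\ran T^*T$ immediately forces $\overline{\ran T}=\ran T$. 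What your approach buys is directness and economy; what the paper's approach buys is that it exercises the $\hila$--$J$ machinery, making the corollary feel like a natural consequence of the extension theory developed earlier, and it is this machinery (specifically the identity $\ran A=\hila$) that is reused later in the proof of the corollary on completeness of $(\ran A,\sipa{\cdot}{\cdot})$.
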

\begin{proof}
   That both (i) and (ii) imply (iii) were proved in Proposition \ref{P:closedrange}, as well as the equivalence of (i) and (ii). Therefore our only task is to prove that (iii) implies (i).  Observe first that $A_N=A$, since $A$ is everywhere defined. Consequently, for all $x\in E$ we have
   \begin{equation*}
    \sip{Tx}{Tx}=\dual{Ax}{x}=\dual{A_Nx}{x}=\sipa{(J^*\circ j_E)x}{(J^*\circ j_E)x},
   \end{equation*}
    whence
    \begin{equation}\label{E:ranJ=ranA}
      \ran A=\ran T^*=\ran (J^*\circ j_E)^*=\ran J,
    \end{equation}
     according to Theorem 1 of Embry \cite{Embry}.
     Note also that \eqref{E:ranJ=ranA} means in other words that $J$ and the restriction of $J$ to $\ran A\subseteq \hila$ have common ranges in $\anti{E}$.
     Moreover
     \begin{equation*}
        \ker J=\ker (J^*\circ j_E)^*=\ran (J^*\circ j_E)^{\perp}\subseteq\ran A^{\perp}=\{0\},
     \end{equation*}
     thus we conclude that $J$ and its restriction to $\ran A$ have common kernels, namely the trivial subspace. An easy algebraic reasoning shows that the only way this can happen is that the operators under consideration coincide, that is to say, when their domains are the same:
     \begin{equation*}
        \ran A=\hila.
     \end{equation*}
     In particular, since $\ran A\subseteq\ran(J^*\circ j_E)$, it follows that $J^*\circ j_E$ has closed range in $\hila$.
     Taking account of the Banach closed range theorem, we have at the same time that  $(J^*\circ j_E)^*$ has closed range in $\anti{E}$, and therefore that the range of $A$ is closed as well, due to \eqref{E:ranJ=ranA}.
\end{proof}

\section{Representable extensions of positive functionals}\label{S:fuk}

In this section we apply the results of the previous sections.
First we recall some facts from the Introduction.

Let us consider the positive operator $A$ of Example \ref{Ex:example4}, that is, $\M$ is supposed to be a left ideal of the Banach $^*$-algebra $\alg$ and $f:\M\to\dupC$ is a positive functional. In addition assume that there is a constant $M\geq0$ such that
 \begin{equation}\label{E:positive_functional}
        \sup\set[\big]{\abs{f(x^*a)}^2}{x\in \alg, \|x\|\leq1}\leq M\cdot f(a^*a)
 \end{equation}
holds for all $a\in\M$. Then for any fixed $a\in\M$ the mapping $Aa:\alg\to\dupC, x\mapsto f(x^*a)$ is anti-linear and continuous by norm bound $\sqrt{M\cdot f(a^*a)}$, so that
\begin{equation*}
A:\alg\supseteq\M\to\anti{\alg}, a\mapsto Aa
\end{equation*}
is a positive operator, moreover $A$ satisfies (iii) of Theorem \ref{T:Krein-Neumann}:
\begin{align*}
            \|Aa\|^2&=\sup\set[\big]{\abs{\dual{Aa}{x}}^2}{x\in\alg, \|x\|\leq1}\\
                    &=\sup\set[\big]{\abs{f(x^*a)}^2}{x\in\alg, \|x\|\leq1}\\
                    &\leq M\cdot f(a^*a)\\
                    &=M\cdot \dual{Aa}{a}.
        \end{align*}
Consequently, along the arguments used in Theorem \ref{T:Krein-Neumann} (see \eqref{E:sip_A} and \eqref{E:J}), we can associate the auxiliary Hilbert space $\hila$ and the canonical embedding operator $J$ of $\hila$ with $A$ such that $A$ admits its Krein--von Neumann extension $A_N=JJ^*\circ j_{\alg}$.

Throughout this section we are interested in investigating representable extensions of positive functionals, which are defined on a left ideal $\M$ of a Banach $^*$-algebra $\alg$. When it is not otherwise indicated, we do not assume $\alg$ to be unital, nor the involution of $\alg$ to be continuous.

\begin{lemma}\label{L:mainlemma}
    Let $\alg$ be a Banach $^*$-algebra, $\M$ a left ideal of $\alg$, and $f:\M\to\dupC$ be a linear functional satisfying $f(a^*a)\geq0$ for $a\in\M$. Then
    \begin{equation}\label{E:spect_rad}
        f(a^*x^*xa)\leq r(x^*x)\cdot f(a^*a),\qquad a\in\M, x\in\alg,
    \end{equation}
    where $r$ stands for the spectral radius function.
    If we assume in addition that
    \begin{align}\label{E:cyclic}
        \abs{f(a)}^2\leq C\cdot f(a^*a),\qquad \mbox{for all $a\in\M$},
    \end{align}
    with some $C\geq0$, then there is a constant $M\geq0$ such that \eqref{E:positive_functional}
    holds for all $a\in\M$.
\end{lemma}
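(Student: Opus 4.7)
I would prove the two displayed inequalities in sequence. For \eqref{E:spect_rad}, the strategy is the classical ``square root trick'': for every $\lambda > r(x^*x)$, realize $\lambda e - x^*x$ as $k^*k$ with $k$ hermitian in the unitization $\alg^1$ of $\alg$, and deduce the bound by evaluating $f$ on $(ka)^*(ka)$.

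Concretely, set $h := x^*x$ and pass to $\alg^1$ (adjoining a unit $e$ if necessary), observing that $\M$ remains a left ideal of $\alg^1$. Since $h = h^*$, one has $\sigma(h) = \overline{\sigma(h)}$, so for any $\lambda > r(h)$ the set $\sigma(\lambda e - h)$ is a conjugation-symmetric subset of the open right half-plane. Hence the principal branch of the square root is holomorphic on a neighborhood of it and $k := \sqrt{\lambda e - h}$ can be defined in $\alg^1$ by the holomorphic functional calculus. Choosing a defining contour $\Gamma$ symmetric under complex conjugation and invoking $\sqrt{\bar z} = \overline{\sqrt{z}}$ for the principal branch, a direct manipulation of the Cauchy-type integral shows $k^* = k$, hence $k^*k = k^2 = \lambda e - h$. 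Since $ka \in \M$, I obtain
\[
0 \leq f\bigl((ka)^*(ka)\bigr) = f\bigl(a^*(\lambda e - h)a\bigr) = \lambda f(a^*a) - f(a^*x^*xa),
\]
and letting $\lambda \downarrow r(x^*x)$ yields \eqref{E:spect_rad}.

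For the bound on $|f(x^*a)|^2$, the additional hypothesis applied to $b := x^*a \in \M$ gives $|f(x^*a)|^2 \leq C f(a^*xx^*a)$, and \eqref{E:spect_rad} applied with $x$ replaced by $x^*$ (so that $y^*y = xx^*$ for $y := x^*$) yields $f(a^*xx^*a) \leq r(xx^*) f(a^*a)$. Since $r(xx^*) \leq \|xx^*\| \leq \|x\|\,\|x^*\|$ is uniformly bounded on the closed unit ball of $\alg$, I may take $M$ to be $C$ times such a uniform bound, giving \eqref{E:positive_functional}.

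The main obstacle is verifying the hermiticity of the square root $k$. Because the paper allows non-Hermitian Banach $^*$-algebras, $\sigma(h)$ need not be real for a hermitian $h$, so one cannot appeal to a real-analytic functional calculus; instead, the hermiticity of $k$ must be extracted from the Cauchy integral formula together with the conjugation symmetry $\sigma(h) = \overline{\sigma(h)}$. Once this square root is produced, the remaining steps (positivity of $f$ on $(ka)^*(ka)$, the limit in $\lambda$, and the Schwarz-type bookkeeping in the second part) are routine.
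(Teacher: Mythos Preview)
Your overall strategy matches the paper's: a square-root trick for \eqref{E:spect_rad}, followed by the Schwarz-type estimate $|f(x^*a)|^2 \leq C\, f(a^*xx^*a) \leq C\, r(xx^*)\, f(a^*a)$ for the second part. However, both halves of your argument tacitly assume that the involution is continuous, which the paper explicitly does \emph{not} assume (see the opening of Example~\ref{Ex:example4}).

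In the first part, your ``direct manipulation of the Cauchy-type integral'' to obtain $k^* = k$ requires passing the involution through a limit of Riemann sums, and this is precisely what fails for a discontinuous involution. The paper sidesteps the issue by invoking the Ford--Bonsall--Stirling square root lemma: if $r(x^*x) < 1$ there is a \emph{unique} $y$ with $r(y) < 1$ satisfying $2y - y^2 = x^*x$, and since $y^*$ satisfies the same algebraic equation with $r(y^*) = r(y)$ (because $\sigma(y^*)=\overline{\sigma(y)}$), uniqueness forces $y^* = y$ without any appeal to continuity of $*$. One then computes $f\big((a-ya)^*(a-ya)\big) = f(a^*a) - f(a^*x^*xa) \geq 0$ and rescales. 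Your holomorphic-calculus construction could be repaired by the same uniqueness trick, but as written the contour-symmetry argument is not enough.

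In the second part, your bound $r(xx^*) \leq \|x\|\,\|x^*\|$ is correct, but $\|x^*\|$ need not be bounded on the unit ball when $*$ is discontinuous, so you do not get a uniform constant $M$. The paper instead invokes a result of Palmer that the \emph{modulus of continuity} $m(\alg) := \sup_{x \neq 0} r(x^*x)^{1/2}/\|x\|$ is finite in any Banach $^*$-algebra, whence $r(xx^*) \leq m(\alg)^2\,\|x\|^2$ and one may take $M = C\, m(\alg)^2$.
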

\begin{proof}
    Let us consider $a\in\M$ and $x\in\alg$ such that $r(x^*x)<1$. Then, according to the square root lemma \cite{ford}, there exists a hermitian element $y\in\alg$ such that $2y-y^2=x^*x$. Since $a-ya\in\M$, we find that
    \begin{align*}
        f(a^*a)-f(a^*x^*xa)&=f(a^*a-2a^*ya+a^*y^2a)=f\big((a-ya)^*(a-ya)\big)\geq0.
    \end{align*}
    If $x$ is  an arbitrarily chosen element of $\alg$, then for each positive number $\varrho$ satisfying $\varrho^2>r(x^*x)$ we have that $r\big((\varrho^{-1}x)^*(\varrho^{-1}x)\big)<1$, whence
    \begin{align*}
     \varrho^2\cdot f(a^*a)\geq f(a^*x^*xa),
    \end{align*}
    by the above considerations. Hence $\varrho\to\sqrt{r(x^*x)}$ yields \eqref{E:spect_rad}.

    In order to prove our second statement, let us recall that by \cite[Theorem 11.1.4]{palmer} there exists a nonnegative finite constant $m(\alg)$ such that
    \begin{equation*}
        r(xx^*)^{1/2}\leq m(\alg)\|x\|, \qquad x\in\alg.
    \end{equation*}
    Consequently, by assumption \eqref{E:cyclic} we obtain that
    \begin{equation*}
    \begin{split}
        \sup\set[\big]{\abs{f(x^*a)}^2}{x\in \alg, \|x\|\leq1}&\leq\sup\set[\big]{C\cdot f(a^*xx^*a)}{x\in \alg, \|x\|\leq1}\\
        &\leq \sup\set[\big]{C r(xx^*)\cdot f(a^*a)}{x\in \alg, \|x\|\leq1}\\
        &\leq C m(\alg)^2\cdot f(a^*a)
     \end{split}	
    \end{equation*}
    for all $a\in\M$, as it is claimed.
\end{proof}
\begin{remark}
    We mention here that the constant $m(\alg)$ appearing in the proof of the previous lemma is the so called \emph{modulus of continuity}, which is defined by
    \begin{equation*}
        m(\alg):=\sup\set[\Bigg]{\frac{r(x^*x)^{1/2}}{\|x\|}}{x\in\alg, x\neq0},
    \end{equation*}
    see \cite[Definition 11.1.3]{palmer}.
\end{remark}
The following result, usually called the Gelfand--Naimark--Segal construction, is well known for (everywhere defined) positive functionals of a Banach $^*$-algebra.
We note that our construction below of the representation induced by the positive functional under consideration differs from the standard one, see e.g. \cite{Sebestyen84}. Our treatment is  based on the results of Section \ref{S:extension} concerning positive extensions of an operator.
\begin{theorem}\label{T:GNS}
    Let $\alg$ be Banach $^*$-algebra, $\M$ a left ideal of $\alg$, and $f$ a linear functional on $\M$.
    Assume that $f$ satisfies \eqref{E:cyclic} with some constant $C\geq0.$
    Then there is a cyclic representation $\pia$ of $\alg$ on the Hilbert space $\hila$ (associated with the positive operator $A$ of Example \ref{Ex:example4}) with a cyclic vector $\zeta_A$ such that
    \begin{equation}\label{E:zetaA}
        f(a)=\sipa{\pia(a)\zeta_A}{\zeta_A},\qquad \mbox{for all~ $a\in\M$}.
    \end{equation}
\end{theorem}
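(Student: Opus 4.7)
The plan is to transport the positive-operator construction of Theorem \ref{T:Krein-Neumann} into the functional setting, and to identify the cyclic vector via a Riesz-type argument inside $\hila$. By Lemma \ref{L:mainlemma}, hypothesis \eqref{E:cyclic} implies \eqref{E:positive_functional}, so the operator $A$ from Example \ref{Ex:example4} fulfils condition (iii) of Theorem \ref{T:Krein-Neumann}. Consequently the pre-Hilbert structure $\sipa{Aa}{Ab}=f(b^*a)$ is available on $\ran A$, together with its completion $\hila$, the embedding $J$, and the bounded map $\tau:=J^*\circ j_\alg\colon\alg\to\hila$ that satisfies $\tau(a)=Aa$ for every $a\in\M$ by \eqref{E:J_adjoint}.

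Next I would define the candidate representation on the dense subspace $\ran A\subseteq\hila$ by $\pia(x)(Aa):=A(xa)$, which makes sense because $\M$ is a left ideal. Both well-definedness and boundedness follow at once from Lemma \ref{L:mainlemma} combined with the estimate $r(x^*x)\leq m(\alg)^2\|x\|^2$:
\begin{equation*}
\sipa{A(xa)}{A(xa)}=f(a^*x^*xa)\leq r(x^*x)\cdot f(a^*a)\leq m(\alg)^2\|x\|^2\cdot\sipa{Aa}{Aa},
\end{equation*}
which lets $\pia(x)$ extend by continuity to a bounded operator on $\hila$ of norm at most $m(\alg)\|x\|$. That $\pia$ is a $*$-homomorphism would then be checked on $\ran A$: multiplicativity is immediate from $\pia(xy)(Aa)=A(xya)=\pia(x)\pia(y)(Aa)$, while the involution identity follows from
\begin{equation*}
\sipa{\pia(x)(Aa)}{Ab}=f(b^*xa)=\sipa{Aa}{A(x^*b)}=\sipa{Aa}{\pia(x^*)(Ab)},
\end{equation*}
extended to $\hila$ by density.

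The construction of $\zeta_A$ is the part of the argument I regard as the main conceptual point, because $\alg$ is neither unital nor equipped with a continuous involution, so there is no obvious candidate of the form ``$Ae$''. I would bypass this obstruction with the Riesz representation theorem, applied to the linear functional $\phi\colon\ran A\to\dupC$, $\phi(Aa):=f(a)$: hypothesis \eqref{E:cyclic} gives $|\phi(Aa)|^2\leq C\cdot f(a^*a)=C\cdot\sipa{Aa}{Aa}$, hence $\phi$ is well-defined and continuous, extends uniquely to $\hila$, and is represented by some $\zeta_A\in\hila$ with $\sipa{Aa}{\zeta_A}=f(a)$ for every $a\in\M$.

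The last ingredient is the identity $\pia(x)\zeta_A=\tau(x)$ for every $x\in\alg$, from which \eqref{E:zetaA} follows on taking $x=a\in\M$, and from which cyclicity is automatic since $\pia\langle\alg\rangle\zeta_A\supseteq\tau(\M)=\ran A$ is already dense in $\hila$. To verify this identity I would test both sides against the dense set $\ran A$: on the one hand,
\begin{equation*}
\sipa{\pia(x)\zeta_A}{Aa}=\sipa{\zeta_A}{\pia(x^*)Aa}=\sipa{\zeta_A}{A(x^*a)}=\overline{f(x^*a)};
\end{equation*}
on the other, using the definition of the adjoint together with $J(Aa)=Aa$,
\begin{equation*}
\sipa{\tau(x)}{Aa}=\dual{j_\alg(x)}{J(Aa)}=\overline{\dual{Aa}{x}}=\overline{f(x^*a)}.
\end{equation*}
Matching these two expressions and invoking density closes the argument.
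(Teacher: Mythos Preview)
Your proposal is correct and follows essentially the same route as the paper: the representation is defined by $\pia(x)(Aa)=A(xa)$ with boundedness coming from Lemma~\ref{L:mainlemma}, the cyclic vector $\zeta_A$ is produced via Riesz applied to $Aa\mapsto f(a)$, and the key identity $\pia(x)\zeta_A=(J^*\circ j_\alg)(x)$ is verified by pairing against $\ran A$. The only cosmetic difference is that you state the norm bound as $m(\alg)\|x\|$ rather than $r(x^*x)^{1/2}$, which is equivalent.
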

\begin{proof}
    For fixed $x\in\alg$ let us define $\pia(x)\in\bha$ as the continuous linear operator arising from a densely defined one as follows:
    \begin{equation}\label{E:pi_A}
        \pia(x)(Aa):=A(xa),\qquad a\in\M.
    \end{equation}
    The well-definedness and the continuity of $\pia(x)$ is guaranteed by Lemma \ref{L:mainlemma}, namely for any $a\in\M$ one has
    \begin{align*}
        \sipa{A(xa)}{A(xa)}&=\dual{A(xa)}{xa}=f(a^*x^*xa)\\ &\leq r(x^*x)\cdot f(a^*a)= r(x^*x)\cdot \sipa{Aa}{Aa}.
    \end{align*}
    It is obvious that $\pia$ is a linear mapping of $\alg$ into $\bha$.
    For $x,y\in\alg$ we have
    \begin{align*}
        \pia(x)\pia(y)(Aa)=\pia(x)(A(ya))=A(xya)=\pia(xy)(Aa),\qquad a\in\M,
    \end{align*}
    whence the multiplicativity of $\pia$ is clear.
    Furthermore, for $x\in\alg$ and $a,b\in\M$
    \begin{align*}
        \sipa{Ab}{\pia(x^*)(Aa)}&= \sipa{Ab}{A(x^*a)}=\overline{\dual{A(x^*a)}{b}}=\overline{f(b^*x^*a)}=\overline{\dual{Aa}{xb}}\\
	&=\dual{A(xb)}{a}=\sipa{A(xb)}{Aa}=\sipa{\pia(x)(Ab)}{Aa}\\
	&=\sipa{Ab}{\pia(x)^*(Aa)}
    \end{align*}
    holds, which implies $\pia(x^*)=\pia(x)^*$, that is,  $\pia$ is a representation of $\alg$.

    We are going  to prove now that $\pia$ is cyclic. First of all observe that the following mapping
    \begin{equation*}
        Aa\mapsto f(a),\qquad a\in\M,
    \end{equation*}
    defines a  continuous linear functional on the dense linear manifold $\ran A$ of $\hila$, thanks our assumptions:
    \begin{align*}
        \abs{f(a)}^2\leq C\cdot f(a^*a)=C\cdot \dual{Aa}{a}=C\cdot \sipa{Aa}{Aa}.
    \end{align*}
    The Riesz representation theorem yields  a unique representing vector $\zeta_A$ in $\hila$ satisfying
    \begin{align}\label{E:representvector}
        f(a)=\sipa{Aa}{\zeta_A}\qquad\mbox{for all $a\in\M$.}
    \end{align}
    Our next claim is to show that $\zeta_A$ satisfies
    \begin{equation}\label{E:piazeta}
        \pia(x)\zeta_A=(J^*\circ j_{\alg})(x),\qquad \mbox{for all $x\in\alg$}.
    \end{equation}
    Indeed, for fixed $x\in\alg$ and $a\in\M$ we check that
    \begin{align*}
        \sipa{Aa}{\pia(x)\zeta_A}&=\sipa{\pia(x^*)(Aa)}{\zeta_A}=\sipa{A(x^*a)}{\zeta_A}\\
        &=f(x^*a)=\dual{Aa}{x}=\dual{J(Aa)}{x}=\sipa{Aa}{(J^*\circ j_{\alg})(x)}.
    \end{align*}
    Thus we have arrived: since for $a\in\M$ we have $(J^*\circ j_{\alg})(a)=Aa$, it follows that
    \begin{equation*}
        \set{\pia(a)\zeta_A}{a\in\M}=\ran A,
    \end{equation*}
    where the latter set is a dense linear manifold of $\hila$ by definition.
    Hence
    \begin{equation*}
        f(a)=\sipa{Aa}{\zeta_A}=\sipa{(J^*\circ j_{\alg})(a)}{\zeta_A}=\sipa{\pia(a)\zeta_A}{\zeta_A},\qquad a\in\M,
    \end{equation*}
    as it is claimed. The proof is therefore complete.
\end{proof}

\begin{remark}
We note that \eqref{E:cyclic} is a reasonable assumption for $f$.
Indeed, we construct a representation of $\mathscr{A}$ by the aid of $f$, and the assumption is equivalent with the representability for everywhere defined positive functionals (see \cite[Theorem 11.3.4]{palmer}).
\end{remark}

The reader could easily give an example when the pre-Hilbert space $(\ran A,\sipa{\cdot}{\cdot})$ of the above construction is not complete, even in the case $\M=\alg$.
In other words, in order to get a Hilbert space it is necessary to make $\ran A$ complete.
The next corollary, based on the results of Section \ref{S:cum} gives a necessary and sufficient condition of $\ran A$ being a Hilbert space (with respect to the norm $\|\cdot\|_{_A}$).
\begin{corollary}
    Let $\alg$ be a Banach $^*$-algebra and let $f:\alg\to\dupC$ be a representable positive functional. The following assertions are equivalent:
    \begin{enumerate}[\upshape (i)]
      \item The pre-Hilbert space $(\ran A,\sipa{\cdot}{\cdot})$ is complete, that is, $\ran A=\hila$;
      \item There is a constant $L\geq0$ such that
      \begin{equation}\label{E:ranAHilbert}
        f(a^*a)\leq L\cdot \sup\set{\abs{f(x^*a)}^2}{x\in \alg, \|x\|\leq1},\qquad a\in\alg.
      \end{equation}
    \end{enumerate}
\end{corollary}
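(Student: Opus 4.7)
The plan is to read condition (ii) as the norm-equivalence statement (iii) of Theorem \ref{T:closedrangeext} and then translate the resulting closed-range property of $A$ into (i) via Corollary \ref{C:closedrange}.

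First I would set the scene. Since $f$ is representable on the whole algebra, it is continuous and satisfies \eqref{E:cyclic} (by \cite[Theorem 11.3.4]{palmer}), so Lemma \ref{L:mainlemma} yields \eqref{E:positive_functional}, and the operator $A:\alg\to\anti{\alg}$ of Example \ref{Ex:example4} is everywhere defined and positive. Positivity forces symmetry in the sense of \eqref{E:symmetry}, so Theorem \ref{T:Hellinger} gives $A\in\bee$; being its own minimal positive extension, $A$ coincides with its Krein--von Neumann extension $A_N$.

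Next I would rewrite (ii) in operator language. Straight from the definitions,
\begin{equation*}
    \sipa{Aa}{Aa}=\dual{Aa}{a}=f(a^*a),\qquad \|Aa\|^2=\sup\set[\big]{\abs{f(x^*a)}^2}{x\in\alg,\|x\|\leq 1},
\end{equation*}
so (ii) asserts exactly $\sipa{Aa}{Aa}\leq L\|Aa\|^2$ for every $a\in\alg$. Combined with the operator Schwarz bound $\|Aa\|^2\leq\|A\|\sipa{Aa}{Aa}$ of Lemma \ref{L:op-Schwarz}, this is precisely the double inequality \eqref{E:closedrange} of Theorem \ref{T:closedrangeext} applied to $A$. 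Hence (ii) is equivalent to $A=A_N$ having closed range in $\anti{\alg}$.

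Finally I would invoke Corollary \ref{C:closedrange} with $T:=J^*\circ j_\alg:\alg\to\hila$, so that $T^*T=A$ by \eqref{E:pina}. Equation \eqref{E:J_adjoint} gives $Ta=Aa$ for every $a\in\alg$, so $\ran T$ inside $\hila$ is exactly the dense defining subspace $\ran A$ of $\hila$. Corollary \ref{C:closedrange} states that $A$ has closed range in $\anti{\alg}$ iff $T$ has closed range in $\hila$; and since $\ran T=\ran A$ is dense in $\hila$, closed range of $T$ is the same as $\ran A=\hila$, namely (i). Chaining these equivalences yields (i) $\Leftrightarrow$ (ii). The one real obstacle is bookkeeping: $\ran A$ plays two distinct roles in the argument, as a subset of $\anti{\alg}$ (where ``closed range of $A$'' lives) and as the dense defining subspace of $\hila$ (where (i) is phrased); once this double role is kept straight, the proof is a direct concatenation of Theorem \ref{T:closedrangeext} and Corollary \ref{C:closedrange}.
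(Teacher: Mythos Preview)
Your proof is correct and follows essentially the same route as the paper: translate \eqref{E:ranAHilbert} via $f(a^*a)=\dual{Aa}{a}$ and $\|Aa\|^2=\sup\{\abs{f(x^*a)}^2:\|x\|\le 1\}$ into condition (iii) of Theorem \ref{T:closedrangeext}, and then pass from closedness of $\ran A$ in $\anti{\alg}$ to closedness of $\ran T=\ran(J^*\circ j_\alg)$ in $\hila$ via Corollary \ref{C:closedrange}. Your write-up is in fact a bit more explicit than the paper's (which handles the converse implication by ``proved analogously''), since you phrase the whole argument as a chain of equivalences.
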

\begin{proof}
    Observe first that for any $a\in\alg$ the equalities
    \begin{equation*}
        \|Aa\|^2=\sup\set{\abs{(Aa)(x)}^2}{x\in \alg, \|x\|\leq1}=\sup\set{\abs{f(x^*a)}^2}{x\in \alg, \|x\|\leq1}
    \end{equation*}
    hold.
    Hence by $\dual{Aa}{a}=f(a^*a)$ the inequality \eqref{E:ranAHilbert} precisely means that $A$ fulfills (iii) of Theorem \ref{T:closedrangeext}.
    This implies that $\ran A$ is closed in $\anti{\alg}$, and therefore that $\ran A=\ran (J^*\circ j_{\alg})$ is closed too in $\hila$, thanks to the proof of Corollary \ref{C:closedrange}.
    This means that $\ran A=\hila$, as it is claimed. The converse implication is proved analogously.
\end{proof}

We are now ready to formulate our main result on extendibility of positive functionals defined on a left ideal of a Banach $^*$-algebra.
\begin{theorem}\label{T:funcext}
    Let $\alg$ be Banach $^*$-algebra, $\M$ a left ideal of $\alg$ and $f:\M\to\dupC$ a linear functional. The following statements are equivalent:
    \begin{enumerate}[\upshape (i)]
    \item There is a representable positive functional $f_N\in\alg'$ extending $f$, which is minimal in the sense that
          \begin{equation*}
            f_N(x^*x)\leq \widetilde{f}(x^*x),\qquad\mbox{for all $x\in\alg$,}
          \end{equation*}
          holds for any representable positive extension $\widetilde{f}\in\alg'$ of $f$.
    \item There is a representable positive functional $\widetilde{f}\in\alg'$ extending $f$.
    \item $f$ satisfies \eqref{E:cyclic}.
    \end{enumerate}
\end{theorem}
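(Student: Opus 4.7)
The plan is to follow the cycle (i) $\Rightarrow$ (ii) $\Rightarrow$ (iii) $\Rightarrow$ (i), where the first implication is immediate. For (ii) $\Rightarrow$ (iii) I would invoke the classical characterization of representable functionals on a Banach $^*$-algebra: any representable positive functional $\widetilde{f}$ satisfies $\abs{\widetilde{f}(x)}^2 \leq K \cdot \widetilde{f}(x^*x)$ for all $x \in \alg$ with a suitable $K \geq 0$ (cf.\ \cite[Theorem 11.3.4]{palmer}). Since $\widetilde{f}|_{\M} = f$, restricting this inequality to $a \in \M$ yields \eqref{E:cyclic} for $f$ with the same constant.

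For (iii) $\Rightarrow$ (i), the idea is to combine Theorem \ref{T:Krein-Neumann} with the GNS-type construction of Theorem \ref{T:GNS}. Assuming \eqref{E:cyclic}, Lemma \ref{L:mainlemma} provides \eqref{E:positive_functional}, so the operator $A : \M \to \anti{\alg}$ from Example \ref{Ex:example4} is well-defined, positive, and satisfies the Schwarz-type inequality \eqref{E:extension}. Theorem \ref{T:Krein-Neumann} then yields its Krein--von Neumann extension $A_N = T^*T$ with $T = J^* \circ j_\alg$, and Theorem \ref{T:GNS} produces a cyclic representation $\pia$ of $\alg$ on $\hila$ with cyclic vector $\zeta_A$ satisfying \eqref{E:zetaA}. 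I would then set
\begin{equation*}
    f_N(x) := \sipa{\pia(x)\zeta_A}{\zeta_A}, \qquad x \in \alg.
\end{equation*}
By construction $f_N$ is representable and positive, it extends $f$ by \eqref{E:zetaA}, and the bound $\|\pia(x)\| \leq r(x^*x)^{1/2} \leq m(\alg)\|x\|$ (already used inside the proofs of Lemma \ref{L:mainlemma} and Theorem \ref{T:GNS}) shows $\abs{f_N(x)} \leq m(\alg)\|x\|\sipa{\zeta_A}{\zeta_A}$, so $f_N \in \alg'$.

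The decisive step is minimality. Given any representable positive extension $\widetilde{f} \in \alg'$ of $f$, the implication (ii) $\Rightarrow$ (iii) applied to $\widetilde{f}$, together with Lemma \ref{L:mainlemma}, produces a bounded positive operator $\widetilde{A} \in \bee$ defined by $\widetilde{A}a(x) := \widetilde{f}(x^*a)$, which clearly extends $A$ because $\widetilde{f}|_{\M} = f$. Theorem \ref{T:Krein-Neumann} then forces $A_N \leq \widetilde{A}$. The passage from operators back to functionals is made through \eqref{E:piazeta}: for every $x \in \alg$,
\begin{equation*}
    f_N(x^*x) = \sipa{\pia(x)\zeta_A}{\pia(x)\zeta_A} = \sipa{(J^*\circ j_\alg)(x)}{(J^*\circ j_\alg)(x)} = \dual{A_N x}{x} \leq \dual{\widetilde{A} x}{x} = \widetilde{f}(x^*x),
\end{equation*}
which is precisely the required inequality.

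I expect this last translation to be the only real obstacle. The operator-level inequality $A_N \leq \widetilde{A}$ is handed to us by Theorem \ref{T:Krein-Neumann}, but rewriting $f_N(x^*x)$ as the quadratic form $\dual{A_N x}{x}$ depends crucially on the identification $\pia(x)\zeta_A = (J^*\circ j_\alg)(x) = Tx$ provided by \eqref{E:piazeta}; without that bridge, the minimality of $A_N$ would not transfer to the minimality of $f_N$.
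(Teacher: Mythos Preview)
Your proposal is correct and follows essentially the same route as the paper: construct $\pia$ and $\zeta_A$ via Theorem~\ref{T:GNS}, set $f_N(x)=\sipa{\pia(x)\zeta_A}{\zeta_A}$, and for minimality pass through the associated operators $A_N$ and $\widetilde{A}$ using the bridge \eqref{E:piazeta}. The only cosmetic difference is in the minimality step: the paper unfolds the quadratic-form formula \eqref{E:quadratic} explicitly (writing $\widetilde{f}(x^*x)$ as a supremum over $y\in\alg$ and then restricting to $y\in\M$), whereas you invoke the conclusion $A_N\leq\widetilde{A}$ of Theorem~\ref{T:Krein-Neumann} directly---your packaging is slightly cleaner, but the underlying argument is identical.
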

\begin{proof}
    It is obvious that (i) implies (ii), and that (ii) implies (iii).
    Thus our only claim is to show that (iii) implies (i).
    Let $\zeta_A\in\hila$ be the cyclic vector of $\pia$, which exists by the proof of Theorem \ref{T:GNS}. We claim that
        \begin{equation*}
            f_N:=\overline{J\zeta_A}\in\alg'
        \end{equation*}
        is the smallest representable extension of $f$, as stated in (i). Indeed, for $a\in\M$ one concludes that
        \begin{align*}
            f_N(a)=\overline{\dual{J\zeta_A}{a}}=\sipa{(J^*\circ j_{\alg})(a)}{\zeta_A}=\sipa{Aa}{\zeta_A}=f(a),
        \end{align*}
        so $f_N$ extends $f$.
	From identity \eqref{E:piazeta} it follows that
        \begin{align*}
            f_N(x)=\sipa{(J^*\circ j_{\alg})(x)}{\zeta_A}=\sipa{\pia(x)\zeta_A}{\zeta_A},
        \end{align*}
        for all $x\in\alg$.
	Thus $f_N$ is representable.
	It only remains to show that $f_N$ is extremal in the sense of (i).
	Consider a  representable positive extension $\widetilde{f}\in\alg'$ of $f$.
	Then $\widetilde{f}$ fulfills all conditions of Theorem \ref{T:GNS}, so that the objects $\widetilde{A}, \widetilde{\hil}$ and $\widetilde{J}$ can be defined as in the proof of Theorem \ref{T:Krein-Neumann}.
	Denoting the inner product of $\widetilde{\hil}$ by $\siptilde{\cdot}{\cdot}$, and using the density of $\ran \widetilde{A}$ in $\widetilde{\hil}$, we obtain for any fixed $x\in\alg$ that
        \begin{align*}
            \widetilde{f}(x^*x)&=\dual{\widetilde{A}x}{x}=\siptilde{\widetilde{A}x}{\widetilde{A}x}\\
                              &=\sup\set[\big]{\abs{\siptilde{\widetilde{A}x}{\widetilde{A}y}}^2}{y\in\alg, \siptilde{\widetilde{A}y}{\widetilde{A}y}\leq1}\\
                              &=\sup\set[\big]{\abs{\dual{\widetilde{A}y}{x}}^2}{y\in\alg, \dual{\widetilde{A}y}{y}\leq1}\\
                              &=\sup\set[\big]{\abs{\widetilde{f}(x^*y)}^2}{y\in\alg, \widetilde{f}(y^*y)\leq1}\\
                              &\geq \sup\set[\big]{\abs{f(x^*a)}^2}{a\in\M, f(a^*a)\leq1}\\
                              &=\sup\set[\big]{\abs{\dual{Aa}{x}}^2}{a\in\M, \dual{Aa}{a}\leq1}\\
                              &=\sup\set[\big]{\abs{\sipa{(J^*\circ j_{\alg})(x)}{Aa}}^2}{a\in\M, \sipa{Aa}{Aa}\leq1}\\
                              &=\sipa{(J^*\circ j_{\alg})(x)}{(J^*\circ j_{\alg})(x)}\\
                              &=\sipa{\pia(x)\zeta_A}{\pia(x)\zeta_A}\\
                              &=\sipa{\pia(x^*x)\zeta_A}{\zeta_A}\\
                              &=f_N(x^*x),
        \end{align*}
        which completes the proof.
\end{proof}
The proof of the above theorem also shows that the minimal representable extension of $f$ coincides with the conjugate of the image of the cyclic vector $\zeta_A$ under $J$, that is,
\begin{equation*}
f_N(x)=\overline{\dual{J\zeta_A}{x}},\qquad \mbox{for all $x\in\alg$}.
\end{equation*}
At the same time, the proof also provides an explicit formula for the values of $f_N$ on positive elements of $\alg$, namely,  for $x\in\alg$
\begin{equation*}
    f_N(x^*x)=\sup\set[\big]{\abs{f(x^*a)}^2}{a\in\M, f(a^*a)\leq1}.
\end{equation*}

In the next theorem we examine the special case when the left ideal of the Banach $^*$-algebra in question possesses an approximate unit in a certain sense:
\begin{theorem}\label{T:approxunit}
    Let $\alg$ be Banach $^*$-algebra, $\M$  a left ideal of $\alg$, and $f:\M\to\dupC$  a linear functional satisfying \eqref{E:cyclic}. In addition assume that there is a norm bounded net $(e_i)_{i\in I}$ of $\alg$ such that
    \begin{equation*}
        \lim_{i,I} e_ia=a,\qquad \mbox{for all $a\in\M$}.
    \end{equation*}
    Then  the net $(A_Ne_i)_{i\in I}$ converges in the norm of $\anti{\alg}$, and its limit satisfies
    \begin{equation*}
        f_N=\lim_{i,I} \overline{A_Ne_i},
    \end{equation*}
    where $f_N$ stands for  the minimal representable extension of $f$.
\end{theorem}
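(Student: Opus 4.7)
The plan is to transport the claimed norm convergence in $\anti{\alg}$ into a norm convergence in the auxiliary Hilbert space $\hila$ from the construction of the Krein--von Neumann extension. Using \eqref{E:J*je*=J} we have $A_N=J(J^*\circ j_{\alg})=JT$ where $T:=J^*\circ j_{\alg}$, and by \eqref{E:piazeta} one has $Te_i=\pia(e_i)\zeta_A$; hence
\begin{equation*}
A_Ne_i=J\pia(e_i)\zeta_A,\qquad i\in I.
\end{equation*}
Since $J:\hila\to\anti{\alg}$ is bounded, and since $f_N=\overline{J\zeta_A}$ by the formula displayed at the end of the proof of Theorem \ref{T:funcext}, the whole statement will follow once we show that $\pia(e_i)\zeta_A\to\zeta_A$ in $\hila$: taking conjugates of $J\pia(e_i)\zeta_A\to J\zeta_A$ will then give exactly $\overline{A_Ne_i}\to f_N$ in $\alg'$.

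To prove this Hilbert space convergence I would use the classical uniform-boundedness-plus-pointwise-convergence argument on the dense subspace $\ran A\subseteq\hila$. For uniform boundedness, Lemma \ref{L:mainlemma} applied with $a\in\M$ and $x=e_i$ yields $\|\pia(e_i)Aa\|_{_A}^2\leq r(e_i^*e_i)\|Aa\|_{_A}^2$ on $\ran A$, and the modulus of continuity bound $r(x^*x)^{1/2}\leq m(\alg)\|x\|$ then gives $\|\pia(e_i)\|\leq m(\alg)\|e_i\|$, which is bounded in $i$ by hypothesis on the net. For pointwise convergence on $\ran A$, multiplicativity of $\pia$ gives $\pia(e_i)Aa=A(e_ia)$ for $a\in\M$, so
\begin{equation*}
\|A(e_ia)-Aa\|_{_A}^2=\dual{A(e_ia-a)}{e_ia-a}=f\bigl((e_ia-a)^*(e_ia-a)\bigr).
\end{equation*}
Because $\M$ is a left ideal, $(e_ia-a)^*(e_ia-a)\in\alg\cdot\M\subseteq\M$, so $f$ is defined there; moreover, since $A_N\in\bee$ is a bounded extension of $A$ and agrees with $A$ on $\M$, the right-hand side equals $\dual{A_N(e_ia-a)}{e_ia-a}\leq\|A_N\|\|e_ia-a\|^2\to 0$ by the approximate unit hypothesis.

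A routine $3\varepsilon$ argument then promotes pointwise convergence on the dense set $\ran A$ to $\pia(e_i)w\to w$ for every $w\in\hila$; applying this to $w=\zeta_A$ and then to the bounded operator $J$ closes the proof. The argument is essentially bookkeeping within the framework already built in Theorems \ref{T:Krein-Neumann} and \ref{T:funcext}; the only mildly subtle point is the verification that $(e_ia-a)^*(e_ia-a)$ still lies in the domain $\M$ of $f$, which is exactly where the left-ideal assumption on $\M$ is used, and the extraction of a usable norm estimate from the bounded extension $A_N$ rather than directly from $f$.
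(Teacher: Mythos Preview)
Your proof is correct and follows essentially the same route as the paper: show that $\pia(e_i)\to I$ strongly on $\hila$ by combining uniform boundedness of $\pia(e_i)$ with pointwise convergence on the dense set $\ran A$, apply this to $\zeta_A$, and then push through the bounded operator $J$ using $f_N=\overline{J\zeta_A}$. The only cosmetic differences are that the paper obtains pointwise convergence on $\ran A$ more directly via the continuity of the bounded operator $J^*\circ j_{\alg}$ (writing $\pia(e_i)(Aa)=(J^*\circ j_{\alg})(e_ia)\to(J^*\circ j_{\alg})(a)=Aa$) rather than through your explicit estimate $\dual{A_N(e_ia-a)}{e_ia-a}\leq\|A_N\|\|e_ia-a\|^2$, and it appeals to the general continuity of $^*$-representations for the uniform bound where you spell it out via $m(\alg)$; neither difference is substantive.
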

\begin{proof}
    For fixed $i\in I$ we have
    \begin{equation*}
        (J^*\circ j_{\alg})(e_ia)=\pia(e_i)(Aa), \qquad \mbox{for all $a\in\M$,}
    \end{equation*}
    according to \eqref{E:J_adjoint} and \eqref{E:pi_A}. By the continuity we conclude that
    \begin{align*}
        \lim_{i,I} \pia(e_i)(Aa)=\lim_{i,I} (J^*\circ j_{\alg})(e_ia)=(J^*\circ j_{\alg})(a)=Aa.
    \end{align*}
    This means that the net $(\pia(e_i))_{i\in I}$ of $\mathscr{B}(\hila)$ converges strongly to the identity operator of $\hila$ on the dense linear manifold $\ran A$.
    At the same time, each representation of a Banach $^*$-algebra is continuous.
    Hence $(\pia(e_i))_{i\in I}$ is bounded with respect to the norm of $\mathscr{B}(\hila)$.
    Consequently, by the Banach--Steinhaus theorem $(\pia(e_i))_{i\in I}$ converges strongly to the identity operator of $\hila$. In the view of \eqref{E:piazeta}
    \begin{align*}
        \lim_{i,I} (J^*\circ j_{\alg})(e_i)=\lim_{i,I} \pia(e_i)\zeta_A=\zeta_A
    \end{align*}
    holds. Thus by continuity of $J$ we have
    \begin{align*}
        \lim_{i,I} \overline{A_Ne_i}=\lim_{i,I} \overline{(JJ^*\circ j_{\alg})(e_i)}= \overline{J\zeta_A}=f_N,
    \end{align*}
    as it is claimed.
\end{proof}

\begin{corollary}\label{C:semiunital}
    Let $\alg$ be Banach $^*$-algebra, $\M$ a left ideal of $\alg$, and $f:\M\to\dupC$  a linear functional.
    If we assume in addition that there exists $e\in\alg$ such that $ea=a$ for all $a\in\M$, then the following statements are equivalent:
    \begin{enumerate}[\upshape (i)]
      \item There is a representable positive functional $f_N\in\alg'$ extending $f$, which is minimal in the sense of  Theorem \ref{T:funcext}.
      \item $f$ satisfies \eqref{E:cyclic}.
      \item $f$ satisfies \eqref{E:positive_functional}.
    \end{enumerate}
     Furthermore, if any of the above conditions is fulfilled, then $f_N=\overline{A_Ne}$.
\end{corollary}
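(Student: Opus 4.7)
The equivalence (i) $\Leftrightarrow$ (ii) is nothing but a restatement of Theorem \ref{T:funcext}, and (ii) $\Rightarrow$ (iii) is contained in Lemma \ref{L:mainlemma} (which does not even use the extra hypothesis on $e$). Thus the only genuinely new content in the equivalences is (iii) $\Rightarrow$ (ii), together with the explicit formula $f_N=\overline{A_N e}$.

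For (iii) $\Rightarrow$ (ii), the plan is to exploit the identity $a=ea=(e^*)^*a$, which recognises every $a\in\M$ as $x^*a$ for $x:=e^*$. After disposing of the trivial case $e=0$ (which forces $\M=\{0\}$), the element $e^*\in\alg$ is nonzero, since the algebraic identity $(e^*)^*=e$ implies $e^*=0\Rightarrow e=0$; hence $x:=e^*/\|e^*\|$ has norm at most $1$. Plugging $x$ into \eqref{E:positive_functional} gives directly $|f(a)|^2\leq M\|e^*\|^2\cdot f(a^*a)$, i.e., \eqref{E:cyclic} with $C:=M\|e^*\|^2$. The only subtlety here is that we must not invoke continuity of the involution (which we do not assume); however only the algebraic relation $(e^*)^*=e$ is needed.

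For the closed-form formula, the idea is to identify the cyclic vector $\zeta_A$ explicitly in terms of $e$. By the defining relation $\pia(x)(Aa)=A(xa)$ from \eqref{E:pi_A} and the identity $ea=a$, we see that $\pia(e)$ acts as the identity on the dense manifold $\ran A\subset\hila$; continuity of $\pia(e)\in\bha$ then forces $\pia(e)=I_{\hila}$, and in particular $\pia(e)\zeta_A=\zeta_A$. Identity \eqref{E:piazeta} now reads $(J^*\circ j_\alg)(e)=\zeta_A$, so from $A_N=J(J^*\circ j_\alg)$ we obtain $A_N e=J\zeta_A$, and the representation $f_N=\overline{J\zeta_A}$ established inside the proof of Theorem \ref{T:funcext} yields $f_N=\overline{A_N e}$. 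The main obstacle here is essentially none: this is bookkeeping on the already-constructed objects, with the only genuine ideas being the normalization trick with $e^*$ in the (iii) $\Rightarrow$ (ii) step and the observation that $\pia(e)$ acts trivially on $\ran A$.
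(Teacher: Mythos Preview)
Your proposal is correct. The formula $f_N=\overline{A_Ne}$ is obtained in the paper by invoking Theorem~\ref{T:approxunit} with the constant net $e_i\equiv e$; your argument that $\pia(e)=I_{\hila}$ and hence $(J^*\circ j_{\alg})(e)=\zeta_A$ is precisely what that theorem does when specialised to a constant net, so this part is essentially the same, just inlined.

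For the implication (iii) $\Rightarrow$ (ii) your route genuinely differs from the paper's. The paper first observes that \eqref{E:positive_functional} forces $Aa\in\anti{\alg}$ and that the Krein--von Neumann extension $A_N\in\bee$ exists by Theorem~\ref{T:Krein-Neumann}; it then applies the Cauchy--Schwarz inequality \eqref{E:sip-Schwarz} for the everywhere-defined positive operator $A_N$ to the pair $(a,e^*)$, obtaining
\[
\abs{f(a)}^2=\abs{\dual{A_Na}{e^*}}^2\leq \dual{A_Ne^*}{e^*}\cdot f(a^*a).
\]
Your argument is more elementary: you simply plug the normalised element $x=e^*/\|e^*\|$ directly into the supremum in \eqref{E:positive_functional}, avoiding the construction of $A_N$ altogether for this step. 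What the paper's approach buys is an explicit constant $C=\dual{A_Ne^*}{e^*}$ tied to the canonical extension, whereas your argument gives the cruder $C=M\|e^*\|^2$; but for the bare equivalence your shortcut is cleaner and makes no appeal to the extension machinery.
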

\begin{proof}
    In the view of Theorem \ref{T:funcext}, we only need to show that (ii) follows from (iii).
    In order to get this implication, fix $a\in\M$.
    Then according to \eqref{E:positive_functional} we find that $Aa$ is an element of $\anti{\mathscr{A}}$, thus
    \begin{align*}
        \abs{f(a)}^2&=\abs{f(ea)}^2=\abs{\dual{Aa}{e^*}}^2=\abs{\dual{A_Na}{e^*}}^2\\
        &\leq \dual{A_Ne^*}{e^*}\dual{A_Na}{a}= \dual{A_Ne^*}{e^*}f(a^*a),
    \end{align*}
    holds, due to the Cauchy--Schwarz inequality.
    Finally, the equation $f_N=\overline{A_Ne}$ follows immediately from Theorem \ref{T:approxunit}.
\end{proof}
\begin{remark}
    We notice here that property  \eqref{E:positive_functional} is weaker then \eqref{E:cyclic} in general. In particular, it is well known that a continuous positive functional on a Banach $^*$-algebra is not necessarily representable, in other words, it does not fulfill \eqref{E:cyclic}.
    On the other hand, each continuous positive functional $f$ defined on the whole of $\alg$ automatically satisfies \eqref{E:positive_functional}. Indeed,  the mapping
        \begin{equation*}
            x\mapsto f(x^*a)=\overline{f(a^*x)}, \qquad x\in\alg,
        \end{equation*}
        defines a continuous, anti-linear functional on $\alg$, for any fixed $a\in\alg$. In other words, the positive operator $A$ of $\alg$ into $\anti{\alg}$, associated with $f$, is everywhere defined and hence  continuous as well in the view of Theorem \ref{T:Hellinger}. Hence we have
        \begin{align*}
            \sup\set[\big]{\abs{f(x^*a)}^2}{x\in \alg, \|x\|\leq1}\leq \|A\|\cdot f(a^*a),
        \end{align*}
        for all $a\in \alg$, thanks to the operator Schwarz inequality \eqref{E:op-Schwarz}.
\end{remark}

The following corollary is an extension of \cite[Proposition 2.1.5]{lesC*alg} to arbitrary Banach $^*$-algebras (see also \cite[Theorem 11.3.4]{palmer}).
\begin{corollary}
    Let $\alg$ be a Banach $^*$-algebra and let $f:\alg\to\mathbb{C}$ be a positive functional satisfying
    \begin{equation*}
        \abs{f(a)}^2\leq C\cdot f(a^*a),\qquad \mbox{for all $a\in\alg$},
    \end{equation*}
    with some positive constant $C$.
    Let $\widetilde{\alg}$ denote the standard unitization of $\alg$, i.e., $\widetilde{\alg}=\dupC\times\alg$, equipped with the usual operations.
    Then there is a representable positive functional $\widetilde{f}:\widetilde{\alg}\to\mathbb{C}$ which extends $f$ in the sense that
    \begin{equation*}
        \widetilde{f}((0,a))=f(a),\qquad \mbox{for all $a\in\alg$}.
    \end{equation*}
    Furthermore, the set of representable extensions of $f$ to $\widetilde{\alg}$ has a minimal element, say $f_N$, in the sense that \begin{equation*}
        f_N(\tilde{x}^*\tilde{x})\leq \widetilde{f}(\tilde{x}^*\tilde{x}), \qquad \mbox{for all $\tilde{x}\in\widetilde{\alg}$}.
    \end{equation*}
    Moreover, $f_N$ is determined on the positive elements of $\widetilde{\alg}$ by the following formula:
    \begin{equation*}
        f_N\big((\lambda,b)^*(\lambda,b)\big)=\sup\set[\big]{\abs{f(\overline{\lambda}a+b^*a)}^2}{a\in\alg, f(a^*a)\leq1},
    \end{equation*}
    for $\lambda\in\dupC$ and $b\in\alg$.
\end{corollary}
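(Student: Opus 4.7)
The plan is to realize $\alg$ as a left ideal of its standard unitization $\widetilde{\alg}$, transport $f$ to a functional on that ideal, and then apply Theorem \ref{T:funcext} directly. Recall that $\widetilde{\alg}=\dupC\times\alg$ carries the multiplication $(\lambda,a)(\mu,b)=(\lambda\mu,\lambda b+\mu a+ab)$ and involution $(\lambda,a)^*=(\overline{\lambda},a^*)$, with norm $\|(\lambda,a)\|=|\lambda|+\|a\|$; it is routine to check that $\widetilde{\alg}$ is a Banach $^*$-algebra. The first step is to verify that $\M:=\{0\}\times\alg$ is a (norm-closed) left ideal of $\widetilde{\alg}$, since $(\lambda,b)(0,a)=(0,\lambda a+ba)\in\M$.

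Next I would define $\tilde{f}:\M\to\dupC$ by $\tilde{f}((0,a)):=f(a)$. It is linear, and for $(0,a)\in\M$ we have $(0,a)^*(0,a)=(0,a^*a)$, so $\tilde{f}((0,a)^*(0,a))=f(a^*a)\geq0$, which shows $\tilde{f}$ is a positive functional on $\M$. Moreover, the hypothesis $|f(a)|^2\leq C\cdot f(a^*a)$ translates verbatim to condition (iii) of Theorem \ref{T:funcext} applied to $\tilde{f}$ on $\M$. Thus Theorem \ref{T:funcext} supplies a representable positive $f_N\in\widetilde{\alg}'$ extending $\tilde{f}$ which is minimal among all representable positive extensions of $\tilde{f}$ to $\widetilde{\alg}$. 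In particular $f_N((0,a))=f(a)$ for all $a\in\alg$, giving both the existence of a representable extension and the minimality assertion; this is the main content of the corollary.

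For the explicit formula on positive elements, I would invoke the supremum formula
\begin{equation*}
    f_N(\tilde{x}^*\tilde{x})=\sup\set[\big]{\abs{\tilde{f}(\tilde{x}^*\tilde{y})}^2}{\tilde{y}\in\M,\ \tilde{f}(\tilde{y}^*\tilde{y})\leq 1},
\end{equation*}
established in the final lines of the proof of Theorem \ref{T:funcext}. Specialising to $\tilde{x}=(\lambda,b)$ and $\tilde{y}=(0,a)\in\M$, a direct computation in the unitization gives $\tilde{x}^*\tilde{y}=(\overline{\lambda},b^*)(0,a)=(0,\overline{\lambda}a+b^*a)$, hence $\tilde{f}(\tilde{x}^*\tilde{y})=f(\overline{\lambda}a+b^*a)$, while $\tilde{f}(\tilde{y}^*\tilde{y})=f(a^*a)$. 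Substituting yields the desired identity.

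There is essentially no hard step: the entire content of the corollary is packaged inside Theorem \ref{T:funcext}, and the only points needing care are the algebraic verifications in the unitization (that $\M$ is a left ideal, that $\tilde{f}$ remains positive and satisfies the Cauchy--Schwarz-type bound, and the computation of $(\lambda,b)^*(0,a)$). The only conceptual move is recognising that a positive functional on the non-unital algebra $\alg$ becomes a functional on a left ideal of the unital algebra $\widetilde{\alg}$, a setting to which the general extension theorem applies without modification.
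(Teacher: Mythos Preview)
Your proposal is correct and follows exactly the paper's approach: set $\M:=\{0\}\times\alg$, transport $f$ to a functional $g$ on $\M$ satisfying \eqref{E:cyclic}, and apply Theorem~\ref{T:funcext}. The paper's proof is a single sentence to this effect; your version simply spells out the routine algebraic verifications and the specialization of the supremum formula for $f_N(\tilde{x}^*\tilde{x})$ stated after Theorem~\ref{T:funcext}.
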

\begin{proof}
    By letting $\M:=\{0\}\times\alg$, we conclude that $g:\M\to\dupC$, given by $g((0,a)):=f(a)$ fulfills \eqref{E:cyclic}. Hence Theorem \ref{T:funcext} can be applied.
\end{proof}

It is known that any positive functional on a $C^*$-subalgebra of a $C^*$-algebra $\alg$ admits a norm preserving positive (and therefore  representable) extension to $\alg$, see \cite[Proposition 2.3.24.]{bratteli}.
Nevertheless, the proof of that statement is not constructive, namely, it makes essentially use of the Hahn--Banach extension theorem.
Our last theorem dealing with representable extendibility of positive functionals is a sharpening of this result in a special case, namely when the  $C^*$-subalgebra under consideration is in addition a $^*$-ideal of $\alg$.
We will need the following

\begin{lemma}\label{L:isometry}
    Let $\alg$ be Banach $^*$-algebra with isometric involution, $\M$ a left ideal of $\alg$ and $f:\M\to\dupC$ a linear functional satisfying
    \begin{equation*}
    \abs{f(a)}^2\leq C\cdot f(a^*a),\qquad \mbox{for all $a\in\M$}.
    \end{equation*}
 Then $f$ also satisfies
   \begin{equation*}
    \sup\set[\big]{\abs{f(x^*a)}^2}{x\in \alg, \|x\|\leq1}\leq C\cdot f(a^*a),\qquad \mbox{for all $a\in\M$},
   \end{equation*}
    with the same constant $C$.
\end{lemma}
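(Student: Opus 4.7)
The plan is to chain together three observations, exploiting the fact that $\M$ is a \emph{left} ideal of $\alg$. Fix $a \in \M$ and $x \in \alg$ with $\|x\| \leq 1$. Since $x^* \in \alg$ and $\M$ is a left ideal, we have $x^*a \in \M$, so the hypothesis on $f$ may be applied to $x^*a$ in place of $a$:
\begin{equation*}
    \abs{f(x^*a)}^2 \leq C \cdot f\bigl((x^*a)^*(x^*a)\bigr) = C \cdot f(a^*xx^*a).
\end{equation*}

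Next, I would invoke the spectral radius inequality \eqref{E:spect_rad} from Lemma \ref{L:mainlemma}, applied with $x$ replaced by $x^*$ (so that $(x^*)^*x^* = xx^*$), which gives
\begin{equation*}
    f(a^*xx^*a) \leq r(xx^*) \cdot f(a^*a).
\end{equation*}
Here I should note that Lemma \ref{L:mainlemma} only requires $f$ to be positive on $\M$, so applying it to the element $x^* \in \alg$ is legitimate.

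The key role of the isometric involution enters at the last step: from $\|x^*\| = \|x\| \leq 1$ we get $\|xx^*\| \leq \|x\|\cdot\|x^*\| \leq 1$, and therefore $r(xx^*) \leq \|xx^*\| \leq 1$. Combining the three estimates yields
\begin{equation*}
    \abs{f(x^*a)}^2 \leq C \cdot r(xx^*) \cdot f(a^*a) \leq C \cdot f(a^*a),
\end{equation*}
and taking the supremum over all $x\in\alg$ with $\|x\|\leq1$ finishes the proof with the \emph{same} constant $C$. There is no real obstacle here; the only delicate point is precisely why the constant is preserved, and that is exactly the content of the isometric involution hypothesis, which forces $r(xx^*) \leq 1$ on the unit ball.
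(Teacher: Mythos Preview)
Your proof is correct and follows essentially the same route as the paper's: apply the hypothesis \eqref{E:cyclic} to $x^*a\in\M$, then use the spectral radius inequality \eqref{E:spect_rad} from Lemma~\ref{L:mainlemma} (with $x^*$ in place of $x$), and finally use the isometric involution to bound $r(xx^*)\leq\|x\|\|x^*\|\leq 1$. The paper presents the same three ingredients in a slightly different order and writes $x^*x$ where strictly speaking $xx^*$ is meant (harmless once one takes the supremum over the unit ball with isometric involution); your write-up is in fact a bit more explicit about this swap.
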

\begin{proof}
    We proceed just as in the proof of Lemma \ref{L:mainlemma}. For any $a\in\M$ and $x\in\alg$, by Lemma \ref{L:mainlemma} we have that
    \begin{equation*}
        f(a^*x^*xa)\leq r(x^*x)\cdot f(a^*a)\leq\|x\|^2f(a^*a),
    \end{equation*}
    where the second inequality is because of the isometry of the involution. Hence  \eqref{E:cyclic} yields
    \begin{align*}
         \sup\set[\big]{\abs{f(x^*a)}^2}{x\in \alg, \|x\|\leq1}&\leq  \sup\set[\big]{C\cdot f(a^*x^*xa)}{x\in \alg, \|x\|\leq1}\\
         &\leq  \sup\set[\big]{C\|x\|^2 f(a^*a)}{x\in \alg, \|x\|\leq1}\\
         &=C\cdot f(a^*a),
    \end{align*}
    for all $a\in\M$.
\end{proof}

\begin{theorem}\label{T:Cstaralg}
    Let $\alg$ be $C^*$-algebra,  $\M$ a closed $^*$-ideal of $\alg$, and $f:\M\to\dupC$ a positive linear functional.
    Then $f$ admits a representable extension to $\alg$. Furthermore, the minimal representable extension $f_N$ of $f$ is norm preserving, that is,  $\|f_N\|=\|f\|$.
\end{theorem}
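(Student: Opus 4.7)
The plan is to reduce the existence part to Theorem~\ref{T:funcext} by verifying that any positive linear functional on the $C^*$-algebra $\M$ fulfils \eqref{E:cyclic}, and then to obtain the norm-preservation by exhibiting $f_N$ as a norm limit of the functionals $\overline{A_Ne_i}$ along a bounded approximate identity $(e_i)$ of $\M$ via Theorem~\ref{T:approxunit}.

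First I would use that $\M$, being a closed $^*$-subalgebra of $\alg$, is itself a $C^*$-algebra, so $f$ is automatically bounded and $\M$ possesses a bounded approximate identity $(e_i)\subset\M$ with $e_i=e_i^*$ and $\|e_i\|\leq 1$. Applying the Cauchy--Schwarz inequality to the positive semi-definite sesquilinear form $(a,b)\mapsto f(b^*a)$ on $\M$ then yields
\begin{equation*}
\abs{f(a)}^2 = \lim_{i}\abs{f(e_ia)}^2 \leq \lim_{i} f(e_i^2)\cdot f(a^*a)\leq \|f\|\cdot f(a^*a),\qquad a\in\M,
\end{equation*}
so that \eqref{E:cyclic} holds with $C=\|f\|$ and Theorem~\ref{T:funcext} produces the minimal representable extension $f_N\in\alg'$, settling the existence claim.

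The inequality $\|f\|\leq\|f_N\|$ is immediate since $f_N$ extends $f$, so the real task is the reverse bound. Here I would invoke Theorem~\ref{T:approxunit} with the same net $(e_i)$: as it is norm bounded in $\alg$ and satisfies $e_ia\to a$ for every $a\in\M$, the theorem guarantees that $f_N=\lim_{i}\overline{A_Ne_i}$ in the norm of $\anti{\alg}$. For each $i$ the element $e_i$ lies in $\M$, whence $A_Ne_i=Ae_i$; moreover, since $\M$ is a left ideal of $\alg$, the product $x^*e_i$ lies in $\M$ for every $x\in\alg$, and therefore
\begin{equation*}
\abs{\dual{A_Ne_i}{x}}=\abs{f(x^*e_i)}\leq\|f\|\cdot\|x^*e_i\|\leq\|f\|\cdot\|x\|\cdot\|e_i\|\leq\|f\|\cdot\|x\|.
\end{equation*}
Thus $\|A_Ne_i\|\leq\|f\|$ for every $i$, and the norm continuity of the limit yields $\|f_N\|=\lim_{i}\|A_Ne_i\|\leq\|f\|$, combining with the trivial direction to give equality.

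The construction is essentially without serious obstacle; the only point that requires care is to verify that Theorem~\ref{T:approxunit} genuinely applies, namely that the bounded approximate identity of the $C^*$-algebra $\M$ is at the same time a norm-bounded net of $\alg$ realising $e_ia\to a$ for all $a\in\M$, which is standard $C^*$-algebra theory. The rest reduces to bookkeeping: the bound $|f(x^*e_i)|\leq\|f\|\|x\|$, available because $\M$ is a $^*$-ideal and $f$ is a bounded positive functional on it, dovetails with the norm convergence delivered by Theorem~\ref{T:approxunit} to produce the sharp equality $\|f_N\|=\|f\|$.
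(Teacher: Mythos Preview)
Your argument is correct. The existence part coincides with the paper's: both invoke the standard fact that a positive functional on the $C^*$-algebra $\M$ is bounded and satisfies $\abs{f(a)}^2\leq\|f\|f(a^*a)$, so Theorem~\ref{T:funcext} applies.

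For the norm bound $\|f_N\|\leq\|f\|$, however, you take a genuinely different route from the paper. The paper works internally with the factorisation $f_N=\overline{J\zeta_A}$: it bounds $\|\zeta_A\|^2\leq\|f\|$ directly from \eqref{E:representvector}, and then uses Lemma~\ref{L:isometry} (which exploits that the involution of a $C^*$-algebra is isometric) together with Proposition~\ref{P:A_N_norm} to obtain $\|J\|^2=\|A_N\|\leq\|f\|$, concluding $\|f_N\|\leq\|J\|\,\|\zeta_A\|\leq\|f\|$. Your approach is more external: by feeding a bounded approximate identity $(e_i)\subset\M$ into Theorem~\ref{T:approxunit} you realise $f_N$ as the norm limit of $\overline{A_Ne_i}=\overline{Ae_i}$, and then the elementary estimate $\abs{f(x^*e_i)}\leq\|f\|\,\|x\|$ (valid because $x^*e_i\in\M$ and $f$ is bounded there) gives $\|A_Ne_i\|\leq\|f\|$ uniformly in $i$. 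Your argument bypasses Lemma~\ref{L:isometry} and the explicit handling of $J$ and $\zeta_A$, at the cost of invoking the approximate-identity machinery of Theorem~\ref{T:approxunit}; the paper's argument is self-contained in the Krein--von~Neumann construction but needs the auxiliary lemma on isometric involution. Both approaches crucially use that $\M$ is a (two-sided) ideal so that $x^*e_i\in\M$, respectively $x^*a\in\M$.
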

\begin{proof}
    Since $\M$ is itself a $C^*$-algebra and $f$ is a positive functional on $\M$, we conclude that $f$ is continuous and satisfies
    \begin{equation}\label{E:fnorm}
        \abs{f(a)}^2\leq\|f\| f(a^*a),\qquad a\in\M.
    \end{equation}
    Then the minimal representable extension $f_N=\overline{J\zeta_A}$ of $f$ exists, in the view of Theorem \ref{T:funcext}. Our only claim therefore is to show that $\|f_N\|=\|f\|$.
    Since \eqref{E:fnorm} yields
    \begin{gather*}
        \|\zeta_A\|^2=\sup\set[\big]{\abs{\sipa{Aa}{\zeta_A}}^2}{a\in\M,\sipa{Aa}{Aa}\leq1}\\
        =\sup\set[\big]{\abs{f(a)}^2}{a\in\M,f(a^*a)\leq1}\leq \|f\|,
    \end{gather*}
    hence due to Lemma \ref{L:isometry} and Proposition \ref{P:A_N_norm} we have $\|J\|^2=\|A_N\|\leq \|f\|$. Consequently,
    \begin{equation*}
        \|f_N\|=\|\overline{J\zeta_A}\|\leq \|J\|\|\zeta_A\|\leq \|f\|.
    \end{equation*}
    The reverse inequality is obvious (as $f_N$ is an extension of $f$).
\end{proof}
\begin{remark}\label{R:Cstaralg}
    Note that under the conditions of the previous theorem the last line of inequalities of the proof implies that
    \begin{equation*}
        \|f\|=\|\zeta_A\|^2=\|A_N\|=\|f_N\|.
    \end{equation*}
    If we assume in addition the existence of an element $e\in\alg$ with $\|e\|\leq1$ such that $ea=a$ for all $a\in\M$, then, in the view of Corollary \ref{C:semiunital} we find that $f_N=\overline{A_Ne}$.
    In particular, the Krein--von Neumann extension $A_N$ of $A$ attains  its norm on $e$.
\end{remark}

In the following important special case we make use of the results of the paper.

\begin{example}\label{Ex:example3}
    Let $\Omega$ be a locally compact Hausdorff space.
    Denote the $C^*$-algebra of complex valued continuous functions vanishing at infinity by $\ctc$, and the subspace of complex valued continuous functions with compact support by $\ktc$.
    Fix a positive Radon measure $\mu$ on $\Omega$, that is to say, $\mu$ is a positive linear functional on $\ktc$. Consider a relatively compact subset $K$ of $\Omega$.
    By setting
    \begin{equation}\label{E:D}
        \D(K):=\set{f\in\ctc}{\supp f\subseteq K},
    \end{equation}
    one easily verifies that  $\D(K)$ is an ideal (moreover, a $^*$-ideal) of $\ctc$. To a given $f\in \D(K)$ we can assign an element $Af$ of  the topological antidual of $\ctc$, namely
    \begin{equation}\label{E:Af}
        \dual{Af}{g}:=\mu(f\cdot \overline{g}),\qquad g\in\ctc.
    \end{equation}
    Since $f$ has compact support, one obtains that $Af$ is continuous with $\|Af\|=\mu(\abs{f})$, and therefore that $A$ is a positive operator with domain $\D(K)$.

    Denote by $\mu_{_K}$ the restriction of $\mu$  to $\D(K)$.
    Fix a nonnegative valued continuous function $e_{_K}$ with compact support such that $0\leq e_{_K}\leq1$ and that
    \begin{equation}\label{E:suppfk}
        K\subseteq \set{x\in \Omega}{e_{_K}(x)=1}.
    \end{equation}
    Then for all $f\in\D(K)$ we conclude that
    \begin{align*}
        \abs{\mu_{_K}(f)}^2 =\mu(\abs{e_{_K}\!\cdot\! f})^2\leq \mu(e_{_K}^2)\mu(\abs{f}^2)=\mu(e_{_K}^2)\mu_{_K}(\abs{f}^2).
    \end{align*}
    Theorem \ref{T:funcext} implies that $\mu_{_K}$ has a minimal representable extension to $\ctc$, say $\mu_{_{K,N}}$.
    By continuity $\mu_{_K}$  can be uniquely extended to the norm closure of $\D(K)$, which is a closed $^*$-ideal of the $C^*$-algebra $\ctc$. Denoting the unique continuous extension also with $\mu_{_K}$, it can be easily verified that $\mu_{_K}$ fulfills all conditions of Theorem \ref{T:Cstaralg}. Thus we find that $\|\mu_{_K}\|=\|\mu_{_{K,N}}\|$. Furthermore, for any $f\in\D(K)$ we have $\|Af\|=\mu(\abs{f})=\mu_{_K}(\abs{f})$, and therefore that
    \begin{align*}
        \|A\|&=\sup\set[\big]{\|Af\|}{f\in\D(K), \|f\|\leq1}\\
             &=\sup\set[\big]{\mu_{_K}(\abs{f})}{f\in\D(K), \|f\|\leq1}\\
             &=\sup\set[\big]{\abs{\mu_{_K}(f)}}{f\in\D(K), \|f\|\leq1}\\
             &=\|\mu_{_K}\|.
    \end{align*}
    Since $\|e_{_K}\|\leq1$ and for any $g\in\overline{\D(K)}$ the equation $e_{_K}\cdot g=g$ holds, we have that $\mu_{_{K,N}}=\overline{A_Ne_{_K}}$ and that $A_N$ is a norm preserving extension of $A$ thanks to Remark \ref{R:Cstaralg}:
    \begin{equation*}
        \|A\|=\|\mu_{_K}\|=\|\mu_{_{K,N}}\|=\|A_N\|.
    \end{equation*}
    In particular this implies
    \begin{equation*}
    \begin{split}
        &\sup\set[\big]{\mu(\abs{f})}{f\in\D(K), \abs{f}\leq1}\\
        &=\inf\set[\big]{M\geq0}{\mu(\abs{f})^2\leq M\cdot\mu(\abs{f}^2)\quad\mbox{for all $f\in\D(K)$}},
    \end{split}
    \end{equation*}
    thanks to Proposition \ref{P:A_N_norm}.

    We notice here that to each nonnegative function $e_{_K}\in\ktc$, satisfying \eqref{E:suppfk}, we can associate a positive functional on $\ctc$ denoted by $e_{_K}.\mu$, which extends $\mu_{_K}$:
    \begin{equation*}
        (e_{_K}.\mu)(g):=\mu(e_{_K}\cdot g), \qquad g\in\ctc.
    \end{equation*}
    It is readily seen that $e_{_K}.\mu$ extends $\mu_{_K}$, and, as it is well known, $\|e_{_K}.\mu\|=\mu(e_{_K})$.
    Furthermore, we have in general
    \begin{equation*}
        \sup\set[\big]{\abs{\mu(f)}}{f\in\D(K), \abs{f}\leq1}<\mu(e_{_K}),
    \end{equation*}
    for each $e_{_K}$ fulfilling \eqref{E:suppfk}. That means in particular that there is no $e_{_K}$ such that $e_{_K}.\mu=\mu_{_{K,N}}$.
    \end{example}

We close our paper with an example of a positive functional which does not admit any representable extension:
\begin{example}
    Let $\alg$ denote the unital $C^*$-algebra of all $2$-by-$2$ complex matrices. By setting
    \begin{equation*}
        \M:=\set[\bigg]{\left(\begin{array}{cc}
                          u & 0 \\
                          v & 0
                        \end{array}\right)
        }{u,v\in\dupC},
    \end{equation*}
    one easily verifies that $\M$ is a left ideal of $\alg$, and that the mapping
    \begin{equation*}
        f:\M\to\dupC,\qquad \left(\begin{array}{cc}
                          u & 0 \\
                          v & 0
                        \end{array}\right) \mapsto v,
    \end{equation*}
    defines a linear functional, satisfying $f(a^*a)=0$ for all $a\in\M$.
    It is readily seen therefore that $f$ is a positive functional which is not satisfy the conditions (ii) and (iii) of Corollary \ref{C:semiunital}.
    Consequently Corollary \ref{C:semiunital} implies that $f$ does not have any representable extension.
\end{example}


\begin{thebibliography}{10}

\bibitem{bratteli}
O. Bratteli, D. W. Robinson,
\newblock {\em Operator algebras and quantum statistical mechanics},
\newblock Vol. I, Springer-Verlag, New York-Heidelberg, 2002.

\bibitem{dixmier}
J.~Dixmier,
\newblock {\'Etude sur les vari\'et\'es et les op\'erateurs de Julia, avec
  quelques applications,}
\newblock {\em Bull. Soc. Math. Fr.}, \textbf{77} (1949), 11--101.

\bibitem{lesC*alg}
J.~Dixmier,
\newblock {\em $C^*$-algebras},
\newblock Translated from the French by F. Jellett, North-Holland Mathematical Library \textbf{15}, North-Holland publishing Co. (Amsterdam-New~York-Oxford, 1977).

\bibitem{Douglas}
R.~G. Douglas,
\newblock On majorization, factorization, and range inclusion of operators on
  {H}ilbert space,
\newblock {\em Proc. Amer. Math. Soc.}, \textbf{17} (1966), 413--415.


\bibitem{Embry}
M. R. Embry,
\newblock Factorization of operators on Banach space,
\newblock {\em Proc. Amer. Math. Soc.}, \textbf{38} (1973), 587--590.

\bibitem{ford}
F. F. Bonsall and D. S. G. Stirling,
\newblock Square roots in Banach $^*$-algebras,
\newblock Glasgow Math. J. \textbf{13} (1972), 74.

\bibitem{matolcsi}
B.~Farkas and M.~Matolcsi,
\newblock Positive forms on Banach spaces,
\newblock \emph{Acta Math. Hungar.}, \textbf{99} (2003), 44--55.

\bibitem{fillmore}
P.~A. Fillmore and J.~P. Williams,
\newblock On operator ranges,
\newblock {\em Advances in Math.}, \textbf{7} (1971), 254--281.

\bibitem{Krein47a}
M.~Krein,
\newblock The theory of self-adjoint extensions of semi-bounded {H}ermitian
  transformations and its applications. {I},
\newblock {\em Math. Sbornik N.S.}, \textbf{20} (1947), 431--495.

\bibitem{Krein47b}
M.~Krein,
\newblock The theory of self-adjoint extensions of semi-bounded {H}ermitian
  transformations and its applications. {II},
\newblock {\em Math. Sbornik N.S.}, \textbf{21} (1947), 365--404.

\bibitem{palmer}
T. W. Palmer,
\newblock  {\em Banach Algebras and the General Theory of $^*$-Algebras II},
\newblock Cambridge University Press (2001).

\bibitem{Sebestyen83}
Z.~Sebesty{\'e}n,
\newblock On ranges of adjoint operators in Hilbert space,
\newblock {\em Acta Sci. Math. (Szeged)}, \textbf{46} (1983), 295--298.

\bibitem{Sebestyen83a}
Z.~Sebesty{\'e}n,
\newblock Restrictions of positive operators,
\newblock {\em Acta Sci. Math. (Szeged)}, \textbf{46} (1983), 299--301.

\bibitem{Sebestyen84}
Z.~Sebesty{\'e}n,
\newblock On representability of linear functionals on $^*$-algebras,
\newblock {\em  Period. Math. Hungar.}, \textbf{15} (1984), 233--239.

\bibitem{Sebestyen93}
Z.~Sebesty{\'e}n,
\newblock Operator extensions on {H}ilbert space,
\newblock {\em Acta Sci. Math. (Szeged)}, \textbf{57} (1993), 233--248.

\bibitem{Sebestyen2000}
Z.~Sebesty{\'e}n,
\newblock Positivity of operator products,
\newblock {\em Acta Sci. Math. (Szeged)}, \textbf{66} (2000), 287--294.

\bibitem{Smulian}
\newblock{Yu. L. Shmul'yan},
\newblock{ Two-sided division in the ring of operators},
{\em Math. Notes}, \newblock \textbf{1} (1967), 400--403.

\bibitem{Tarcsay}
Zs.~Tarcsay,
\newblock On operators with closed ranges,
\newblock {\em Acta Sci. Math. (Szeged)},  \textbf{77} (2011), 579--587.

\bibitem{Tarcsay2011}
Zs. Tarcsay,
\newblock Operator extensions with closed range,
\newblock {\em Acta Math. Hungar.,} \textbf{135} (2012), 325--341.

\bibitem{Tarcsay_Banach}
Zs. Tarcsay,
\newblock Closed range positive operators on Banach spaces,
\newblock {\em Acta Math. Hungar.,} \textbf{142} (2014), 494--501.

\end{thebibliography}
\end{document}